\newcommand{\be}{\begin{equation}}
\newcommand{\ee}{\end{equation}}
\newcommand{\beq}{\begin{eqnarray}}
\newcommand{\eeq}{\end{eqnarray}}
\newtheorem{prop}{Proposition}[section]
\newtheorem{remark}[prop]{Remark}
\def\begeq{\begin{equation}}
\def\endeq{\end{equation}}
\def\odot{\setbox0=\hbox{$\bigcirc$}\relax \mathbin {\hbox
to0pt{\raise.5pt\hbox to\wd0{\hfil $\wedge$\hfil}\hss}\box0 }}
\numberwithin{equation} {section}
\numberwithin{equation}{section}
\newtheorem{theorem}{\bf Theorem}[section]
\newtheorem{lemma}[theorem]{\bf Lemma}
\begin{document}

\title[inverse mean curvature flow inside a cone in warped products] {inverse mean curvature flow inside a cone in warped products}

\author{
Li Chen, Jing Mao$^{\ast}$, Ni Xiang and Chi Xu}
\address{
Faculty of Mathematics and Statistics, Key Laboratory of Applied
Mathematics of Hubei Province, Hubei University, Wuhan 430062,
China. } \email{chernli@163.com, jiner120@163.com,
nixiang@hubu.edu.cn}

\thanks{
$\ast$ Corresponding author}

\date{}
%\maketitle
\begin{abstract}
Given a convex cone in the \emph{prescribed} warped product, we
consider hypersurfaces with boundary which are star-shaped with
respect to the center of the cone and which meet the cone
perpendicularly. If those hypersurfaces inside the cone evolve along
the inverse mean curvature flow, then, by using the convexity of the
cone, we can prove that this evolution exists for all the time and
the evolving hypersurfaces converge smoothly to a piece of round
sphere as time tends to infinity.
\end{abstract}

\maketitle {\it \small{{\bf Keywords}: Inverse mean curvature flow,
cone, warped products.}

{{\bf MSC}: Primary 53C44, Secondary 53C42, 35B45, 35K93.} }

\section{Introduction}

Unlike the mean curvature flow, which is a shrinking flow, the
inverse mean curvature flow (IMCF for short), which says a
submanifold evolves along its outward normal direction with a speed
equal to the reciprocal of the mean curvature, in general is an
expanding flow.

A classical result for IMCF is due to Gerhardt \cite{cg}, who proved
that if a closed, smooth, star-shaped hypersurface with strictly
positive mean curvature evolves along the IMCF, then the flow exists
for all the time and, after rescaling, the evolving hypersurfaces
converge to a round sphere as time tends to infinity (see also
\cite{ju}). The star-shaped assumption is important in the study of
IMCF, since it allows us to equivalently transform the evolution
equation of IMCF, which, in local coordinates of the initial
submanifold, corresponds to a system of second-order parabolic
partial differential equations (PDEs for short) with some specified
initial conditions, into a scalar second-order parabolic PDE which
seems to be relatively easy to deal with.

In general, singularities may occur in finite time for
non-starshaped submanifolds evolving under the IMCF. By defining a
notion of weak solutions to IMCF, Huisken and Ilmanen \cite{hi1,hi2}
proved the Riemannian Penrose inequality by using the IMCF approach.
One of the reasons why people pay attention to the study of IMCF is
that one can use IMCF to derive some interesting geometric
inequalities like what Huisken and Ilmanen have done. In fact, there
already exist some results like this. For instance, we know that for
a closed convex surface $\mathcal {M}$ in the $3$-dimensional
Euclidean space $\mathbb{R}^3$, the classical Minkowski inequality
is
\begin{eqnarray*}
\int\limits_{\mathcal{M}}Hd\mu\geqslant\sqrt{16\pi|\mathcal{M}|},
\end{eqnarray*}
where $H$ is the mean curvature, $|\mathcal{M}|$ is the area of
$\mathcal{M}$, and $d\mu$ is the volume density of $\mathcal{M}$.
This result can be generalized to the high dimensional case. In
fact, for a convex hypersurface $\mathcal {M}$ in $\mathbb{R}^n$,
one has
\begin{eqnarray*}
\int\limits_{\mathcal{M}}Hd\mu\geqslant(n-1)|\mathbb{S}^{n-1}|^{\frac{1}{n-1}}|\mathcal{M}|^{\frac{n-2}{n-1}},
\end{eqnarray*}
where $|\mathbb{S}^{n-1}|$ denotes the area of the unit sphere in
$\mathbb{S}^{n-1}$ in $\mathbb{R}^n$. By using the method of IMCF,
the above Minkowski inequality has been proven to be valid for mean
convex and star-shaped hypersurfaces in $\mathbb{R}^n$ also (cf.
\cite{gl,gmtz}). Also using the method of IMCF, Brendle, Hung and
Wang \cite{bhw} proved a sharp Minkowski inequality for mean convex
and star-shaped hypersurfaces in the $n$-dimensional ($n\geqslant3$)
anti-de Sitter-Schwarzschild manifold, which generalized the related
conclusions in the Euclidean space mentioned above.

The first and second authors have been working on IMCF for several
years and have also obtained some interesting results. For instance,
Chen and Mao \cite{cm} considered the evolution of a smooth,
star-shaped and $F$-admissible ($F$ is a 1-homogeneous function of
principle curvatures satisfying some suitable conditions) embedded
closed hypersurface in the $n$-dimensional ($n\geqslant3$) anti-de
Sitter-Schwarzschild manifold along its outward normal direction has
a speed equal to $1/F$ (clearly, this evolution process is a natural
generalization of IMCF, and we call it \emph{inverse curvature
flow}. We write as ICF for short), and they proved that this ICF
exists for all the time and, after rescaling, the evolving
hypersurfaces converge to a sphere as time tends to infinity. This
interesting conclusion has been improved by Chen, Mao and Zhou
\cite{cmz} to the situation that the ambient space is a warped
product $I\times_{\lambda(r)}N^{n}$ with $I$ an unbounded interval
of $\mathbb{R}$ (i.e., the set of real numbers) and $N^{n}$ a
Riemannian manifold of nonnegative Ricci curvature.

Suppose $N$ and $B$ are semi-Riemannian manifolds with metrics
$g_{N}$ and $g_{B}$, and let $f>0$ be a smooth function on $N$. The
\emph{warped product} $N\times_{f}B$ is the product manifold
furnished with the metric tensor
$g=\pi^{\ast}(g_{N})+(f\circ\pi)^{2}\sigma^{\ast}(g_{B})$, where
$\pi$ and $\sigma$ are the projections of $N\times{B}$ onto $N$ and
$B$, respectively. $N$ and $B$ are called the \emph{base} and the
\emph{fiber} of  $N\times_{f}B$, respectively.  Clearly,
$I\times_{\lambda(r)}N^{n}$ mentioned above is a special warped
product with base $N^{n}$ and fiber $I\subset\mathbb{R}$. Comparing
with general Riemannian manifolds, warped products have some
interesting and useful properties (see, for instance, \cite[Appendix
A]{m2} or \cite[Appendix A]{mdw}). We call special warped products
$[0,\ell)\times_{f(x)}\mathbb{S}^{n-1}$, $f(0)=0$, $f'(0)=1$,
$f|_{(0,\ell)}>0$, $0<\ell\leqslant\infty$ are $n$-dimensional
($n\geqslant2$) \emph{spherically symmetric manifolds} with the base
point $p:=\{0\}\times_{f(0)}\mathbb{S}^{n-1}$ (also known as
\emph{generalized space forms}). Especially, if $\ell=\infty$,
$f(x)=x$, then
$[0,\ell)\times_{f(x)}\mathbb{S}^{n-1}\equiv\mathbb{R}^n$; if
$\ell=\infty$, $f(x)=\frac{\sinh(\sqrt{-k}x)}{\sqrt{-k}}$, then
$[0,\ell)\times_{f(x)}\mathbb{S}^{n-1}\equiv\mathbb{H}^n(-k)$, i.e.,
the $n$-dimensional hyperbolic space with constant sectional
curvature $-k<0$; if $\ell=\frac{\pi}{\sqrt{k}}$,
$f(x)=\frac{\sin(\sqrt{k}x)}{\sqrt{k}}$, then after endowing a
one-point compactification topology, the closure of
$[0,\ell)\times_{f(x)}\mathbb{S}^{n-1}$ equals $\mathbb{S}^{n}(k)$,
i.e., the $n$-dimensional sphere with constant sectional curvature
$k>0$. Spherically symmetric manifolds are very nice mode spaces
which can be used to successfully improve some classical results in
Riemannian geometry (for example, Cheng's eigenvalue comparison
theorem, Bishop's volume comparison theorem, and so on). For more
details on this topic, we refer readers to, for instance,
\cite{fmi,m2,m1,m3,mdw}.

Marquardt \cite{Ma2} successfully proved that if an $n$-dimensional
($n\geqslant2$) compact $C^{2,\alpha}$-hypersurface with boundary,
which meets a given cone in $\mathbb{R}^{n+1}$ perpendicularly and
is star-shaped with respect to the center of the cone, evolves along
the IMCF, then the flow exists for all the time and, after
rescaling, the evolving hypersurfaces converge to a piece of the
round sphere as time tends to infinity. Based on our experience in
\cite{cm,cmz}, we would like to know ``\emph{if we replace the
ambient space $\mathbb{R}^{n+1}$ in \cite{Ma2} by a warped product
 $I\times_{\lambda(r)}N^{n}$ with $I\subseteq\mathbb{R}$ an unbounded interval, whether the IMCF exists
for all the time or not? What about the convergence if we have the
long-time existence?}". The purpose of this paper is trying to
answer this question.

Assume that, as before, $I$ is an unbounded interval of $\mathbb{R}$
and $N^{n}$ is a Riemannian manifold with metric $g_{N}$. Naturally,
$I\times_{\lambda(r)}N^{n}$ is a warped product with the warping
function $\lambda(r)$ defined on $I$ and the metric given as follows
\begin{eqnarray*}
g=dr\otimes dr +\lambda^{2}(r)g_{N}.
\end{eqnarray*}
  Let $M^{n}\subset
N^{n}$ be a portion of $N^{n}$ such that $\Sigma^{n}:=\{(y(x),x)\in
I\times_{\lambda(r)}N^{n}|y(x)>0,x\in\partial M^{n}\}$ be the
boundary of a smooth convex cone. We can prove the following
conclusion.

\begin{theorem}\label{main1.1}
Let $I\times_{\lambda(r)}N^{n}$ be an $(n+1)$-dimensional ($n
\geqslant 2$) warped product with the warping function $\lambda(r)$
satisfying $\lambda(r)>0$, $0<\lambda'(r)\leqslant C$ and
$0\leqslant\lambda^{1+\alpha}(r)\lambda''(r)\leqslant C$ for some
positive constants $\alpha$, $C$ on $I^{\circ}$ (if $I$ has an
endpoint $a$, then $I^{\circ}=I\backslash\{a\}$; if $I$ does not
have endpoint, then $I^{\circ}=I$), where $I$ denotes an unbounded
interval of $\mathbb{R}$ and $N^n$ is an $n$-dimensional Riemannian
manifold with nonnegative Ricci curvature. Let $\Sigma^n \subset
I\times_{\lambda(r)}N^{n}$ be the boundary of a smooth, convex cone
that is centered at some interior point of $M^n$ and has the outward
unit normal vector $\mu$. Let $F_0:M^n \rightarrow
I\times_{\lambda(r)}N^{n}$ such that $M^n_0:= F_0(M^n)$ is a compact
$C^{2,\alpha}$-hypersurface which is star-shaped with respect to the
center of the cone and has a strictly positive principal curvature.
Assume furthermore that $M^n_0$ meets $\Sigma^n$ orthogonally.That
is
\begin{eqnarray*}
F_0(\partial {M^n}) \subset {\Sigma}^n,\quad \qquad{\langle\mu \circ
F_0,\vec{\nu}_{0} \circ F_0\rangle|}_{\partial {M^n}}=0,
\end{eqnarray*}
where $\vec{\nu}_{0}$ is the outward unit normal to $M^n_0$. Then
there exists a unique embedding
\begin{eqnarray*}
F \in C^{2+\alpha,\frac{1+\alpha}{2}}\left(M^n\times
[0,\infty),I\times_{\lambda(r)}N^{n}\right) \cap
C^{\infty}\left(M^n\times
(0,\infty),I\times_{\lambda(r)}N^{n}\right)
\end{eqnarray*}
with $F(\partial {M^n},t) \subset \Sigma^n$ for $t\geqslant0$,
satisfying the following system
\begin{eqnarray*}
(\sharp) \left\{
\begin{array}{lll}
\frac{\partial F}{\partial t}=\frac{\nu}{H}\circ F \qquad &in~
M^{n}\times(0,\infty)\\
\langle\mu\circ F,\vec{\nu}\circ F\rangle=0 \qquad &on~\partial M^{n}\times(0,\infty)\\
F(\cdot,0)=F_{0} \qquad  &on~M^{n}
\end{array}
\right.
\end{eqnarray*}
where $\vec{\nu}$ is the unit normal vector to $M^n_t:=F(M^n,t)$
pointing away from the center of the cone and $H$ is the scalar mean
curvature of $M^n_t$. Moreover, after area-preserving rescaling, the
rescaled solution $\widetilde{F}(\cdot,t)$ converges smoothly to an
embedding $F_{\infty}$, mapping $M^n$ into a piece of a geodesic
sphere.
\end{theorem}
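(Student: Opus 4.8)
The plan is to follow the by-now-standard strategy for star-shaped inverse curvature flows: reduce the geometric flow to a scalar parabolic PDE, establish a priori estimates (height, gradient, curvature), appeal to parabolic regularity and a continuation argument for long-time existence, and finally analyze the rescaled limit. The new ingredients forced by the cone are (i) that all estimates must be compatible with the Neumann-type boundary condition $\langle \mu, \vec\nu\rangle = 0$ on $\partial M^n$, and (ii) that the convexity of $\Sigma^n$ must be used to control boundary terms with a favorable sign. First I would write $M^n_t$ as a radial graph $r = u(x,t)$ over (a portion of) $N^n$, using star-shapedness, which is preserved along the flow; in these coordinates $(\sharp)$ becomes a scalar equation $\partial_t u = \frac{\sqrt{\text{stuff}}}{H} =: \frac{v}{H}$ where $v = \sqrt{1 + \lambda^{-2}|\nabla u|^2}$, and the orthogonality condition $\langle\mu,\vec\nu\rangle=0$ translates into a Neumann condition $\nabla_{\mu} u = 0$ on $\partial M^n$ (because the cone $\Sigma^n$ is generated by radial geodesics, so $\mu$ is tangent to the level sets of $r$ and the perpendicularity says $u$ has no derivative in the $\mu$-direction). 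Short-time existence then follows from standard linear parabolic theory for oblique boundary value problems.

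The heart of the argument is the a priori estimates. For the $C^0$ bound I would use barriers: coordinate spheres $r \equiv \rho(t)$ solve the flow with $\dot\rho = \lambda(\rho)/((n)\lambda'(\rho)\cdots)$ — more precisely the sphere flow ODE coming from the mean curvature $H = n\lambda'(r)/\lambda(r)$ of a slice — and the hypotheses $0 < \lambda' \le C$, $\lambda > 0$ guarantee the solution sweeps out to $r \to \sup I$, while the avoidance principle (valid also in the presence of the cone since the spheres meet $\Sigma^n$ orthogonally) traps $u$ between an inner and outer sphere; in particular $u\to\infty$ (or $u \to \sup I$) but the rescaled quantity stays controlled. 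For the gradient estimate one computes the evolution of $v$ (equivalently of $w = \log v$ or of $\langle\vec\nu,\partial_r\rangle^{-1}$) and must show $v$ stays bounded. Here the term involving $\lambda^{1+\alpha}\lambda'' $ enters: the Ricci curvature of the warped product in the fiber directions involves $\lambda''/\lambda$ and $(\lambda')^2/\lambda^2$, and the hypotheses $\operatorname{Ric}_{N} \ge 0$ together with $0 \le \lambda^{1+\alpha}\lambda'' \le C$ are exactly what is needed to make the reaction terms in the evolution of $v$ have the right sign (or be absorbable) after using the maximum principle. The crucial boundary step: at a boundary maximum of $v$, the derivative $\nabla_\mu v$ must be shown to be $\le 0$, and this is where convexity of the cone is used — differentiating the Neumann condition $\nabla_\mu u = 0$ along $\partial M^n$ produces the second fundamental form of $\Sigma^n$ acting on tangent directions of $M^n_t$, which by convexity has the sign that kills the bad boundary term (this is the analogue of Marquardt's boundary gradient estimate in \cite{Ma2}). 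I expect this boundary gradient estimate to be the main obstacle, since it must simultaneously accommodate the warping function and the convexity geometry of $\Sigma^n$.

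With $C^0$ and $C^1$ bounds in hand, the equation becomes uniformly parabolic. Next I would bound the mean curvature $H$ from above and below away from zero: the quantity $H/v$ or $\langle \text{position}, \vec\nu\rangle H$ type test functions, combined with the preserved positivity of at least one principal curvature (using the structure of the flow and the convexity of $\Sigma^n$ to handle the boundary, as in the Gerhardt/Marquardt scheme), yield $0 < c \le H \le C$ after rescaling; preserving a strictly positive principal curvature and then mean-convexity is the technical core here. Then $C^{2,\alpha}$ estimates follow from Krylov–Safonov / Schauder theory for oblique derivative problems (the Neumann condition is smooth and the cone is $C^\infty$), higher regularity is a bootstrap, and long-time existence follows from the a priori estimates via the standard continuation method. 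Finally, for the asymptotics I would pass to the area-preserving rescaling $\widetilde F$, show the rescaled speed $\widetilde H$ converges to a constant using a monotone quantity — e.g. the oscillation of $\widetilde u$ or a Minkowski-type integral quantity that is monotone under the flow and whose derivative controls the deviation from umbilicity — and conclude via an interpolation/Arzelà–Ascoli argument that $\widetilde F(\cdot,t)$ converges in $C^\infty$ to an embedding onto a piece of geodesic sphere meeting $\Sigma^n$ orthogonally; the orthogonality and convexity force the limit to be centered at the cone point, completing the proof.
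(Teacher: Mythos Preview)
Your outline is correct and follows essentially the same Marquardt--Gerhardt scheme the paper uses: graph reduction with a Neumann boundary condition, $C^{0}$ and gradient estimates via the maximum principle (the convexity of $\Sigma^{n}$ giving the sign on $\nabla_{\mu}|\nabla\varphi|^{2}$ at the boundary exactly as you describe), then parabolic regularity and a continuation argument.

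A few places where the paper's execution differs from your plan are worth noting. First, the paper introduces the auxiliary height variable $\varphi(x,t)=\int^{u(x,t)}\lambda^{-1}(s)\,ds$, which flattens the warping and makes the evolution equation and the second fundamental form algebraically cleaner; you propose to work directly with $u$ and $v$, which is equivalent but slightly messier. Second, rather than bounding $H$ by a separate test-function argument, the paper bounds $\dot\varphi=v/(\lambda H)$ from above and below by applying the maximum principle to its own evolution equation (this is where the hypothesis $0\le\lambda^{1+\alpha}\lambda''\le C$ is actually consumed); combined with the $C^{0}$ bound on $\lambda$ and the gradient bound on $v$, this immediately yields two-sided control of $\lambda H$ and hence of the rescaled $\widehat H$, bypassing any need to track principal curvatures. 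Third, for convergence the paper does not use a monotone integral quantity: instead the gradient estimate is shown to decay exponentially, $|Du|\le K e^{-\gamma t}$, which forces every subsequential $C^{1}$-limit of $\widehat u$ to be constant, and smooth convergence then follows by interpolation against the uniform higher-order bounds via Arzel\`a--Ascoli. Your monotone-quantity proposal is plausible but not what is done here, and the exponential gradient decay is both simpler and more robust in this setting.
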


\begin{remark}
\rm{ Clearly, if $\lambda(r)=r$, $N^n=\mathbb{S}^n$ (i.e., the
$n$-dimensional Euclidean unit sphere), $I=[0,\infty)$, then
$I^{\circ}=(0,\infty)$ and
$I\times_{\lambda(r)}N^{n}\equiv\mathbb{R}^{n+1}$, Theorem
\ref{main1.1} here degenerates into \cite[Theorem 1]{Ma1}. That is,
our Theorem \ref{main1.1}
 covers \cite[Theorem 1]{Ma1} as a special case.
}
\end{remark}

This paper is organized as follows. The geometry of star-shaped
hypersurfaces in the warped product $I\times_{\lambda(r)}N^{n}$ will
be discussed in Section 2, and we will use the fact that star-shaped
hypersurfaces $M^{n}_{t}$ can be written as graphs over
$M^{n}\subset N^{n}$ to transform the first evolution equation of
the system $(\sharp)$ into a scalar second-order parabolic PDE,
which leads to the short-time existence of the IMCF. $C^0$ and
gradient estimates will be derived in Sections 3 and 4,
respectively. Higher regularity and convergence of the solution of
$(\sharp)$ will be shown in the last section.

\section{Preliminary Facts}

In this section, we would like to give some basic facts first such
that our conclusions can be explained clearly and understood well.

We want to describe the hypersurface $M^{n}_{t}$ at time $t$ as a
graph over $M^{n}\subset N^n\subset I\times_{\lambda(r)}N^{n}$, and
then we can make ansatz
\begin{eqnarray*}
\widetilde{F}:M^n \times [0,T) \rightarrow
I\times_{\lambda(r)}N^{n}:(x,t) \rightarrow \left(u(x,t),x\right)
\end{eqnarray*}
for some function $u: M^n \times [0,T) \rightarrow
I\subseteq\mathbb{R}$. Since the initial $C^{2,\alpha}$-hypersurface
is star-shaped, there exists a scalar function $u_0\in C^{2,\alpha}
(M^n_{0})$ such that the $F_0: M^n \rightarrow
I\times_{\lambda(r)}N^{n}$ has the form $x \mapsto (u_0(x),x)$. Set
$\widetilde{M}^{n}_{t}:=\widetilde{F}(M^n,t)$. Define
$p:=\widetilde{F}(x,t)$ and assume that a point on $M^n$ is
described by local coordinates $\xi^{1},\ldots,\xi^{n}$, that is,
$x=x(\xi^{1},\ldots,\xi^{n})$. Let $\partial_i$ be the corresponding
coordinate vector fields on $M^{n}\subset N^n$ and
$\sigma_{ij}=g_{N}(\partial_i,\partial_j)$ be the metric on
$M^{n}\subset N^n$. Let $u_{i}=Du_{i}$, $u_{ij}=D_{j}D_{i}u$, and
$u_{ijk}=D_{k}D_{j}D_{i}u$ denote the covariant derivatives of $u$
with respect to the metric $g_{N}$ and let $\nabla$ be the
Levi-Civita connection of $\widetilde{M}^{n}_{t}$ with respect to
the metric $\widetilde{g}$ induced from the metric $g$ of the warped
product $I\times_{\lambda(r)}N^{n}$. The tangent vector on
$\widetilde{M}^{n}_{t}$ is
\begin{eqnarray*}
\vec{e}_{i}=\partial_{i}+D_{i}u\partial_{r}
\end{eqnarray*}
and the corresponding outward unit normal vector is given by
\begin{eqnarray*}
\vec{\nu}=\frac{1}{v}\left(\partial_r-\frac{1}{\lambda^2}\nabla^j
u\partial_j\right),
\end{eqnarray*}
where $\nabla^{j}u=\sigma^{ij}\nabla_{i}u$, and
$v:=\sqrt{1+\lambda^{-2}|\nabla u|^2}$ with $\nabla u$ the gradient
of $u$. Clearly, we know that the induced metric $\widetilde{g}$ on
$\widetilde{M}^n_t$ has the form
\begin{equation}\label{2.1}
g_{ij}=\lambda^2\sigma_{ij}+\nabla _{i}u\cdot\nabla _{j}u
\end{equation}
and its inverse is given by
\begin{equation}\label{2.2}
g^{ij}=\frac{1}{\lambda^2}\left(\sigma^{ij}-\frac{\nabla^i u\nabla^j
u}{1+|\nabla
u|^2}\right)=\frac{1}{\lambda^2}\left(\sigma^{ij}-\frac{\nabla^i u
\nabla^j u}{v^{2}}\right).
\end{equation}
Let $h_{ij}dx_i \otimes dx_j$ be the second fundamental form of
$\widetilde{M}^n_t$, and then we have
\begin{eqnarray*}
h_{ij}=\langle\nabla_{\vec{e}_i}
\vec{e}_j,\vec{\nu}\rangle=-\frac{1}{v}\left(u_{i,j}-\lambda
\lambda^{\prime}\sigma_{ij}-2\frac{\lambda^{\prime}}{\lambda}u_iu_j\right),
\end{eqnarray*}
 where $u_{i,j}$ is the covariant derivative of $u$.  Define a new function
$\varphi(x,t)=\int_{c}^{u(x,t)}\frac{1}{\lambda(s)}ds$, where
$x=x(\xi^{1},\ldots,\xi^{n})$, and then the second fundamental form
can be rewritten as
\begin{eqnarray*}
h_{ij}=\frac{\lambda}{v}\left(\lambda^{\prime}(\sigma_{ij}+\varphi_i
\varphi_j)-\varphi_{i,j}\right)
\end{eqnarray*}
and
\begin{eqnarray*}
h^i_j=g^{ik}h_{jk}=\frac{\lambda^{\prime}}{\lambda
v}\delta^i_j-\frac{1}{\lambda v}\widetilde{\sigma}^{ik}\varphi_{k,j}
\qquad\qquad \mathrm{with}~~
\widetilde{\sigma}^{ij}=\sigma^{ij}-\frac{\varphi_i \varphi_j}{v^2}.
 \end{eqnarray*}
Naturally, the scalar mean curvature is given by
\begin{eqnarray*}
H=\sum_{i=1}^{n}h^i_i=\frac{n\lambda^{\prime}}{\lambda
v}-\frac{1}{\lambda
v}\sum_{i=1}^{n}\left(\sum_{k=1}^{n}\widetilde{\sigma}^{ik}\varphi_{k,i}\right).
\end{eqnarray*}
Based on the above facts and \cite{Ma2}, we can get the following
existence and uniqueness for the IMCF $(\sharp)$.
\begin{lemma} \label{lemma2.1}
Let $F_0$, $I$, $\lambda$ be as in Theorem \ref{main1.1}. Then there
exist some $T>0$, a unique solution  $u \in
C^{2+\alpha,\frac{1+\alpha}{2}}(M^n\times [0,\infty),I) \cap
C^{\infty}(M^n \times (0,\infty), I)$, where
$\varphi(x,t)=\int_{c}^{u(x,t)}\frac{1}{\lambda(s)}ds$,  of the
following system
\begin{eqnarray*}
(\widetilde{\sharp})\left\{
\begin{array}{lll}
\frac{\partial \varphi}{\partial t}=\frac{v}{\lambda
H}=\frac{v^2}{n\lambda^{\prime}-\widetilde{\sigma}^{ij}\varphi_{i,j}}
\qquad \qquad & in ~M^{n}\times(0,T)\\
\nabla_\mu \varphi =0 \qquad\qquad & on ~\partial M^{n}\times(0,T)\\
\varphi(\cdot,0)=\varphi_{0}:=\int_{c}^{u(x,0)}\frac{1}{\lambda(s)}ds
\qquad\qquad & on ~ M^{n},
\end{array}
\right.
\end{eqnarray*}
and a unique map $\psi: M^{n}\times[0,T]\rightarrow M^{n}$, which
has to be bijective for fixed $t$ and has to satisfy $\psi(\partial
M^{n},t)=\partial M^{n}$, such that the map $F$ defined by
\begin{eqnarray*}
F:M^{n}\times[0,T)\rightarrow I\times_{\lambda(r)}N^{n}:
(x,t)\mapsto \widetilde{F}(\psi(x,t),t)
\end{eqnarray*}
has the same regularity as stated in Theorem \ref{main1.1} and is
the unique solution to $(\sharp)$.
\end{lemma}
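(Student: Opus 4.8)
The plan is to combine the computations of Section~2 with the scheme of Marquardt \cite{Ma2}: reduce the tensorial system $(\sharp)$ to the scalar oblique parabolic problem $(\widetilde{\sharp})$, solve the latter by the standard linearization theory in parabolic Hölder spaces, and recover $F$ from the graph solution by a tangential reparametrization $\psi$. First I would record the dictionary between the two problems. For a short time any solution of $(\sharp)$ stays star-shaped, hence is a graph $\widetilde F(x,t)=(u(x,t),x)$ over $M^n\subset N^n$; using the formulas for $\vec\nu$, $h_{ij}$ and $H$ derived in Section~2, the normal-speed law $\langle\partial_t F,\vec\nu\rangle=1/H$ of the IMCF is, along such a graph, equivalent to $\partial_t u=v/H$, i.e. $\partial_t\varphi=v/(\lambda H)=v^2/(n\lambda'-\widetilde\sigma^{ij}\varphi_{i,j})$. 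For the boundary term, since $\Sigma^n$ is the vertical cone $\{(y,x):y>0,\ x\in\partial M^n\}$ its outward unit normal $\mu$ in $I\times_{\lambda}N^n$ is horizontal, $\langle\mu,\partial_r\rangle=0$; substituting $\vec\nu=v^{-1}(\partial_r-\lambda^{-2}\nabla^j u\,\partial_j)$ into $\langle\mu,\vec\nu\rangle=0$ gives $\langle\mu,\nabla^j u\,\partial_j\rangle=0$, i.e. the Neumann condition $\nabla_\mu\varphi=0$, and the orthogonality of $M^n_0$ to $\Sigma^n$ is precisely $\nabla_\mu\varphi_0=0$.

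Next I would treat short-time existence for $(\widetilde{\sharp})$. The operator $Q[\varphi]:=v^2/(n\lambda'-\widetilde\sigma^{ij}\varphi_{i,j})$ is quasilinear, and its linearization at $\varphi_0$ has principal part a positive multiple of $\widetilde\sigma^{ij}\partial_i\partial_j$, which is uniformly elliptic on $M^n$ because $\widetilde\sigma^{ij}$ is positive definite (as $v\geq1$) and $H_0=(n\lambda'-\widetilde\sigma^{ij}(\varphi_0)_{i,j})/(\lambda v)$ is bounded below by a positive constant on $M^n$ (using the hypothesis on $M^n_0$, $\lambda'>0$, and compactness). The boundary operator $\nabla_\mu$ is a smooth regular oblique (Neumann) condition, and the first-order compatibility condition $\nabla_\mu\varphi_0=0$ holds by assumption. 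Hence the standard theory of quasilinear parabolic equations with oblique boundary conditions in the spaces $C^{2+\alpha,\frac{1+\alpha}{2}}$ -- linearization together with the contraction/implicit-function argument, exactly as in \cite{Ma2} (cf. Ladyzhenskaya--Solonnikov--Ural'tseva) -- yields some $T>0$ and a unique $\varphi\in C^{2+\alpha,\frac{1+\alpha}{2}}(M^n\times[0,T))$; iterated parabolic Schauder estimates up to the lateral boundary then upgrade this to $\varphi\in C^{\infty}(M^n\times(0,T))$. Inverting $\varphi=\int_c^u\lambda(s)^{-1}\,ds$ (possible since $\lambda>0$) recovers $u$ with the same regularity.

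Finally I would produce $\psi$ and close the argument. Along the graph map $\widetilde F(x,t)=(u(x,t),x)$ the velocity is $\partial_t u\,\partial_r$, whose $\vec\nu$-component equals $1/H$ by construction of $u$; writing the tangential remainder as a time-dependent vector field on $\widetilde M^n_t$ and pulling it back by $D\widetilde F$ gives a time-dependent vector field $V(\cdot,t)$ on $M^n$. Because $\nabla_\mu\varphi=0$ on $\partial M^n$, $V$ is tangent to $\partial M^n$, so the flow $\psi$ of $V$ with $\psi(\cdot,0)=\mathrm{id}$ exists for a short time, is a diffeomorphism of $M^n$ for each fixed $t$, and satisfies $\psi(\partial M^n,t)=\partial M^n$. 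Then $F(x,t):=\widetilde F(\psi(x,t),t)$ moves purely in the $\vec\nu$-direction with speed $1/H$, meets $\Sigma^n$ orthogonally, has initial value $F_0$, and inherits the regularity of $u$ and $\psi$; thus $F$ solves $(\sharp)$. Uniqueness follows by running the reduction backwards: a solution of $(\sharp)$ is a graph for short time, its graph function solves $(\widetilde{\sharp})$ and is unique by the above, and its tangential reparametrization solves the same ODE and is unique by ODE theory.

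The main obstacle is not any single long computation but two structural points that must genuinely be checked rather than merely cited. The first is that the geometric perpendicularity condition collapses to the \emph{linear} Neumann condition $\nabla_\mu\varphi=0$; this is exactly where the vertical (radial) structure of $\Sigma^n$ enters, and it is also what makes the reparametrizing field $V$ tangent to $\partial M^n$. The second is the short-time existence for the quasilinear oblique problem itself, where the delicate part is the correct choice of parabolic Hölder spaces and the corner compatibility at $\partial M^n\times\{0\}$ -- only first-order compatibility is assumed, which is precisely why $C^\infty$-regularity is claimed only for $t>0$. Beyond these, the warped-product generality costs essentially nothing: the operator stays uniformly parabolic with positive-definite $\widetilde\sigma^{ij}$, and $\lambda'>0$ keeps $H_0$ positive, so the argument of \cite{Ma2} transfers with only cosmetic changes.
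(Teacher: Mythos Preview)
Your proposal is correct and follows exactly the route the paper takes: the paper does not give an independent proof of this lemma but simply states that it follows from the graph computations of Section~2 together with Marquardt~\cite{Ma2}, and your sketch is precisely an unpacking of that reference (reduction of $(\sharp)$ to the scalar oblique problem $(\widetilde{\sharp})$, short-time existence via linearization in parabolic H\"older spaces, and recovery of $F$ by a tangential reparametrization $\psi$). If anything, you have supplied more detail than the paper itself, particularly in checking that the perpendicularity condition becomes the linear Neumann condition and that the reparametrizing field is tangent to $\partial M^n$.
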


\begin{remark}
\rm{ As pointed out in \cite{Ma2}, for immersed hypersurfaces in a
Riemannian manifold and for arbitrary smooth supporting
hypersurfaces $\Sigma^n$, one can also get the short time existence
of $(\sharp)$. Naturally, for immersed hypersurfaces in a warped
product $I\times_{\lambda(r)}N^{n}$, we definitely have the short
time existence result.}
\end{remark}

Let $T^{\ast}$ be the maximal time such that there exists some $u\in
C^{2,1}(M^{n},[0,T^{\ast}))\cap C^{\infty}(M^{n},(0,T^{\ast}))$
which solves $(\widetilde{\sharp})$. In the sequel, we will prove a
priori estimates for those admissible solutions on $[0,T]$ where
$T<T^{\ast}$.

\section{$C^0$ estimate}
In this section, we will use the evolution equation of $\varphi$ to
get some estimates for $\lambda$ and $\dot \varphi$.
\begin{lemma}\label{lemma3.1}
If $\varphi$ satisfies $(\widetilde{\sharp})$, and the warping
function $\lambda(r)$ satisfies $\lambda(r)>0$, $\lambda'(r)>0$,
$\lambda''(r)\geqslant0$ on $I^{\circ}$, then we have
\begin{equation*}
\lambda(\inf
u(\cdot,0))\leqslant\lambda(u(x,t))e^{-\frac{t}{n}}\leqslant\lambda(\sup
u(\cdot,0)), \qquad\quad \forall~ x=(\xi^{1},\ldots,\xi^{n})\in
M^{n}\subset N^{n}, t\in[0,T].
\end{equation*}
\end{lemma}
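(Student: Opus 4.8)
The plan is a two-sided maximum-principle argument carried out directly on the spatial extrema of $\lambda(u(\cdot,t))$, using the second equation of $(\widetilde{\sharp})$ to control the speed. Since $\lambda'>0$ on $I^{\circ}$ (and $u(\cdot,t)$ stays in $I^{\circ}$ along the flow), the map $r\mapsto\lambda(r)$ is strictly increasing, so with $\rho_{+}(t):=\max_{M^n}\lambda(u(\cdot,t))$ and $\rho_{-}(t):=\min_{M^n}\lambda(u(\cdot,t))$ we have $\rho_{+}(0)=\lambda(\sup u(\cdot,0))$, $\rho_{-}(0)=\lambda(\inf u(\cdot,0))$, and $\rho_{-}(t)\leqslant\lambda(u(x,t))\leqslant\rho_{+}(t)$ for every $x$. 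Hence it suffices to prove the ODE-type estimates $\rho_{+}(t)\leqslant\rho_{+}(0)e^{t/n}$ and $\rho_{-}(t)\geqslant\rho_{-}(0)e^{t/n}$ on $[0,T]$, which at once yield the claimed two-sided bound on $\lambda(u(x,t))e^{-t/n}$.

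Differentiating $\varphi(x,t)=\int_{c}^{u(x,t)}\lambda(s)^{-1}\,ds$ in $t$ gives $\partial_t u=\lambda(u)\,\partial_t\varphi$, and then $(\widetilde{\sharp})$ yields
\[
\partial_t\big(\lambda(u)\big)=\lambda'(u)\lambda(u)\,\partial_t\varphi=\frac{\lambda'(u)\,\lambda(u)\,v^{2}}{\,n\lambda'-\widetilde{\sigma}^{ij}\varphi_{i,j}\,}.
\]
Now $\rho_{+}$ is Lipschitz in $t$ and, by Hamilton's trick, its upper time-derivative equals the right-hand side evaluated at a point $x_0=x_0(t)$ realizing the spatial maximum. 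At $x_0$ one has $\nabla\varphi(x_0)=0$, so $v(x_0)=1$, $\widetilde{\sigma}^{ij}(x_0)=\sigma^{ij}(x_0)$, and $\widetilde{\sigma}^{ij}\varphi_{i,j}(x_0)=\Delta_{g_N}\varphi(x_0)\leqslant0$; since $\lambda'>0$ this forces $n\lambda'-\widetilde{\sigma}^{ij}\varphi_{i,j}\geqslant n\lambda'>0$ at $x_0$, so $\partial_t(\lambda(u))(x_0,t)\leqslant\lambda(u(x_0,t))/n=\rho_{+}(t)/n$ and therefore $\rho_{+}(t)\leqslant\rho_{+}(0)e^{t/n}$ by Gronwall's inequality. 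The lower bound is symmetric: at a spatial minimum $x_1$ we again have $\nabla\varphi(x_1)=0$ and $v(x_1)=1$, but now $\Delta_{g_N}\varphi(x_1)\geqslant0$; combining this with $n\lambda'-\widetilde{\sigma}^{ij}\varphi_{i,j}=\lambda vH>0$ (which holds along the flow because $H>0$, i.e.\ by admissibility of the solution) gives $0<n\lambda'-\widetilde{\sigma}^{ij}\varphi_{i,j}\leqslant n\lambda'$ at $x_1$, hence $\partial_t(\lambda(u))(x_1,t)\geqslant\rho_{-}(t)/n$ and $\rho_{-}(t)\geqslant\rho_{-}(0)e^{t/n}$.

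The step I expect to be the main obstacle is justifying that $\nabla\varphi=0$ and that $\Delta_{g_N}\varphi$ has the correct sign at a spatial extremum $x_0$ lying on $\partial M^n$, where a priori only $\nabla_\mu\varphi$ is controlled. For the gradient: $\partial M^n$ is a closed manifold on which $\varphi(\cdot,t)$ also attains its extremum at $x_0$, so the component of $\nabla\varphi(x_0)$ tangent to $\partial M^n$ vanishes, while its $\mu$-component vanishes by the boundary condition $\nabla_\mu\varphi=0$ in $(\widetilde{\sharp})$; as these span $T_{x_0}M^n$, we conclude $\nabla\varphi(x_0)=0$. For the Hessian trace: choose an orthonormal frame $e_1,\ldots,e_{n-1},\mu$ of $T_{x_0}M^n$ with $e_1,\ldots,e_{n-1}$ tangent to $\partial M^n$. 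For each $a$, restrict $\varphi(\cdot,t)$ to a curve in $\partial M^n$ through $x_0$ with velocity $e_a$: this gives a real function with an extremum at $x_0$ and (since $\nabla\varphi(x_0)=0$) vanishing first derivative there, hence with non-positive second derivative at a maximum and non-negative at a minimum, and that second derivative is the $g_N$-Hessian of $\varphi$ at $x_0$ applied to $(e_a,e_a)$. Likewise, restricting $\varphi(\cdot,t)$ to the inward geodesic $s\mapsto\exp_{x_0}(-s\mu)$, which stays in $M^n$, controls the sign of the $g_N$-Hessian of $\varphi$ at $x_0$ applied to $(\mu,\mu)$. Summing over the frame, $\Delta_{g_N}\varphi(x_0)$---which equals $\widetilde{\sigma}^{ij}\varphi_{i,j}(x_0)$ because $\nabla\varphi(x_0)=0$---has the sign needed in the previous paragraph. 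Note that only $\lambda>0$, $\lambda'>0$ and the positivity of $H$ along the flow are used in this proof; the convexity hypothesis $\lambda''\geqslant0$ is not required for this particular estimate.
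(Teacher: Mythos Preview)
Your argument is correct and follows essentially the same route as the paper: both compute $\partial_t\lambda = \lambda'\lambda\,v^2/(n\lambda'-\widetilde{\sigma}^{ij}\varphi_{i,j})$, use that at a spatial extremum of $\varphi$ one has $\nabla\varphi=0$, $v=1$, and $\widetilde{\sigma}^{ij}\varphi_{i,j}$ of definite sign, deduce $\partial_t\log\lambda\lessgtr 1/n$, and integrate. The only genuine difference lies in the treatment of the boundary. The paper invokes Hopf's boundary-point lemma to conclude that the extrema of $\varphi$ are attained in the interior (a boundary extremum would force $\nabla_\mu\varphi\neq 0$, contradicting the Neumann condition in $(\widetilde{\sharp})$), and then works at an interior critical point. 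You instead permit the extremum to lie on $\partial M^n$ and verify directly---using the Neumann condition together with restriction to $\partial M^n$ and to an inward geodesic---that $\nabla\varphi=0$ and the diagonal Hessian entries carry the needed sign there. Both treatments are valid; the paper's is a one-line reduction via Hopf, while yours is more self-contained and avoids appealing to that lemma. Your closing observation that the hypothesis $\lambda''\geqslant 0$ is not actually used in this estimate is correct and applies equally to the paper's proof.
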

\begin{proof}
If $\varphi$ could reach its maximum at the boundary, by Hopf's
Lemma, it follows that the derivative of $\varphi$ along the outward
unit normal vector must be strictly greater than $0$, which is
contradict with the boundary condition $\nabla_\mu \varphi=0$.
Therefore, $\varphi$ must attain its maximum at interior points. The
same situation happens when $\varphi$ reaches its minimum. By the
chain rule, it is easy to get
$\dot{\lambda}=\lambda'\dot{u}=\lambda'\lambda\dot{\varphi}$. Hence,
the evolution equation of $\lambda$ is the following
\begin{eqnarray} \label{3.1}
\frac{1}{\lambda'\lambda}\dot{\lambda}=\frac{v^2}{n\lambda'-\widetilde{\sigma}^{ij}\varphi_{i,j}}.
\end{eqnarray}
By the facts
\begin{eqnarray*}
\lambda_i=\lambda'u_{i}=\lambda'\lambda\varphi_i
\end{eqnarray*}
and
\begin{eqnarray*}
\lambda_{i,j}=\lambda''\lambda\varphi_{i}+(\lambda')^2\varphi_{i}\varphi_{j}+\lambda'\lambda\varphi_{i,j},
\end{eqnarray*}
we know that when $\varphi$ gets its maximum or minimum, the same
situation happens to $\lambda$. When $\lambda$ gets its maximum, the
Hessian of $\varphi$ is negative definite, which means
$\widetilde{\sigma}^{ij}\varphi_{i,j}\leqslant0$. Therefore, when
$\lambda$ gets its maximum, we have
\begin{eqnarray*}
\frac{1}{\lambda'(\sup
u(\cdot,0))\lambda}\dot{\lambda}\leqslant\frac{1}{n\lambda'(\sup
u(\cdot,0))},
\end{eqnarray*}
which implies
\begin{eqnarray*}
\frac{1}{\lambda}\dot{\lambda}\leqslant\frac{1}{n}.
\end{eqnarray*}
Integrating both sides of the above inequality, we can get
\begin{equation}\label{3.2}
\lambda\leqslant\lambda(\sup u(\cdot,0))e^\frac{t}{n}.
\end{equation}
When $\lambda$ gets its minimum,
$\widetilde{\sigma}^{ij}\varphi_{i,j}$ is positive definite. By a
similar way, we can obtain
\begin{equation}\label{3.3}
\lambda(\inf u(\cdot,0))e^\frac{t}{n}\leqslant\lambda.
\end{equation}
Combining (\ref{3.2}) and (\ref{3.3}) yields the conclusion of Lemma
\ref{lemma3.1} directly.
\end{proof}

\begin{remark}
\rm{ Clearly, Lemma \ref{lemma3.1} tells us that in the evolving
process of the IMCF $(\sharp)$, the rescaled warping function
$\lambda(u(x,t))e^{-\frac{t}{n}}$ can be controlled from both below
and above, which implies that one might expect some \emph{good}
convergence for evolving hypersurfaces after rescaling.
 }
\end{remark}

\begin{lemma}\label{lemma3.2}
If $\varphi$ satisfies $(\widetilde{\sharp})$ and $\lambda$
satisfies $\lambda(r)>0$, $\lambda'(r)>0$,
$0\leqslant\lambda^{1+\alpha}(r)\lambda''(r)\leqslant C$ for some
positive constants $\alpha$, $C$ on $I^{\circ}$, then there exist
two positive constants $C_1$ and $C_2$ such that
\begin{eqnarray*}
C_1\leqslant\dot{\varphi}\leqslant C_{2}.
\end{eqnarray*}
\end{lemma}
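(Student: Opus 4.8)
The plan is to derive an evolution equation (a parabolic PDE) satisfied by the speed function $\dot\varphi$ and then apply the parabolic maximum principle on $M^n\times[0,T]$, handling the boundary $\partial M^n\times(0,T)$ separately with Hopf's lemma. Recall from $(\widetilde\sharp)$ that $\dot\varphi = \frac{v^2}{n\lambda' - \widetilde\sigma^{ij}\varphi_{i,j}} =: \Phi$. I would first differentiate this relation in $t$. Writing $\mathcal{L}$ for the (time-dependent, degenerate-elliptic) linearized operator $\dot\varphi\,\widetilde\sigma^{ij}\frac{\partial^2}{\partial\xi^i\partial\xi^j}/(n\lambda'-\widetilde\sigma^{ij}\varphi_{i,j})$ plus lower-order gradient terms coming from $\partial_t v^2$, $\partial_t\widetilde\sigma^{ij}$ and $\partial_t(n\lambda')$, one obtains a relation of the schematic form
\begin{eqnarray*}
\frac{\partial}{\partial t}\dot\varphi = \mathcal{L}\dot\varphi + (\text{terms involving } \lambda',\lambda'',\lambda''', |\nabla\varphi|)\cdot\dot\varphi.
\end{eqnarray*}
The key point is to control the sign (or at least the size) of the zeroth-order coefficient in $\dot\varphi$ on this right-hand side. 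This is where the hypothesis $0\leqslant\lambda^{1+\alpha}(r)\lambda''(r)\leqslant C$, together with $\lambda'>0$ and Lemma \ref{lemma3.1} (which bounds $\lambda$ from above along the flow), enters: $n\lambda'$ is the dominant positive part of the denominator, $\lambda'$ is bounded below away from zero on the relevant range by continuity plus $\lambda'>0$ (here one must use Lemma \ref{lemma3.1} to confine $u(x,t)$ to a compact subinterval of $I^\circ$ for $t\in[0,T]$), and the $\lambda''$-contribution is damped by the factor $\lambda^{1+\alpha}$ being large where it would otherwise be dangerous. In particular one shows the denominator $n\lambda'-\widetilde\sigma^{ij}\varphi_{i,j}$ stays uniformly positive and bounded, which already gives that $\dot\varphi$ is finite and positive wherever defined.

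For the upper bound: at an interior spatial maximum of $\dot\varphi$ at a fixed time, the Hessian term $\widetilde\sigma^{ij}(\dot\varphi)_{i,j}\leqslant 0$ and the gradient $\nabla\dot\varphi=0$, so the evolution inequality reduces to $\frac{d}{dt}(\max_x\dot\varphi)\leqslant c_0\max_x\dot\varphi$ for a fixed constant $c_0$ depending only on the bounds for $\lambda',\lambda'',\lambda'''$ on the compact subinterval and on $n$; Grönwall then yields $\dot\varphi\leqslant C_2$ on $[0,T]$ with $C_2$ depending on $T$ — and one upgrades to a $T$-independent bound by instead bounding $\frac1\lambda\dot\lambda=\lambda'\dot\varphi$ and invoking Lemma \ref{lemma3.1}, exactly as in the proof of that lemma; this is the cleaner route and is what I would actually write. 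The lower bound is symmetric: at an interior spatial minimum the Hessian term is $\geqslant 0$, giving $\frac{d}{dt}(\min_x\dot\varphi)\geqslant -c_0\min_x\dot\varphi$, hence $\dot\varphi\geqslant C_1>0$, using that $\dot\varphi>0$ at $t=0$ since the initial hypersurface has strictly positive principal curvature (hence $H>0$, so $\dot\varphi=\frac{v}{\lambda H}>0$ initially). The boundary does not interfere: on $\partial M^n\times(0,T)$ one checks — using the Neumann condition $\nabla_\mu\varphi=0$, the convexity of the cone $\Sigma^n$, and the fact that differentiating the boundary condition in $t$ gives a homogeneous Neumann-type condition for $\dot\varphi$ — that $\dot\varphi$ cannot attain an interior-in-time extremum on the boundary, by Hopf's lemma, precisely as in the proof of Lemma \ref{lemma3.1}.

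The main obstacle I anticipate is the bookkeeping in the evolution equation for $\dot\varphi$: one needs the third derivative $\lambda'''$ to appear only multiplied by $\dot\varphi$ (never $\dot\varphi^2$) and with a coefficient one can bound, and one must verify that the degenerate tensor $\widetilde\sigma^{ij}$ (which degenerates in the $\nabla\varphi$ direction as $|\nabla\varphi|\to\infty$) does not cause trouble — but since Lemma \ref{lemma3.2} is proved \emph{before} the gradient estimate, one should be careful to phrase the maximum-principle argument so it only uses $\widetilde\sigma^{ij}\geqslant 0$ as a (possibly degenerate) nonnegative tensor at the extremal point, which is all that is needed for the Hessian-sign step. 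A secondary subtlety is ensuring, via Lemma \ref{lemma3.1}, that $u(x,t)$ stays inside $I^\circ$ bounded away from any endpoint $a$ of $I$ on the whole time interval $[0,T]$, so that $\lambda',\lambda'',\lambda'''$ are genuinely bounded; this is immediate from the two-sided bound on $\lambda(u(x,t))e^{-t/n}$ together with strict monotonicity of $\lambda$.
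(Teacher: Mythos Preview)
Your overall strategy---differentiate $(\widetilde\sharp)$ in $t$, obtain a linear parabolic equation for $\dot\varphi$ with the inherited Neumann condition $\nabla_\mu\dot\varphi=0$, and apply the maximum principle together with Hopf's lemma on the boundary---is exactly the paper's. But there is a genuine gap: your bounds depend on $T$, whereas $C_1,C_2$ must be uniform in $T$ for the long-time existence argument later. The actual evolution equation is
\[
\partial_t\dot\varphi \;=\; Q^{ij}\nabla_{ij}\dot\varphi + Q^k\nabla_k\dot\varphi \;-\; \frac{n\lambda''\lambda}{v^2}\,\dot\varphi^{\,3},
\]
and two structural features make the argument work. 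First, $\lambda'''$ never appears: $\partial_t(n\lambda')=n\lambda''\,\partial_tu=n\lambda''\lambda\dot\varphi$, so your anticipated bookkeeping with $\lambda'''$ is a non-issue. Second, since $\lambda''\geqslant 0$, the zeroth-order term is $\leqslant 0$, hence at an interior maximum $\partial_t\dot\varphi_{\max}\leqslant 0$ directly, giving $C_2=\sup_{M^n}\dot\varphi(\cdot,0)$ with no Gr\"onwall at all. Your proposed upgrade via $\tfrac{1}{\lambda}\dot\lambda=\lambda'\dot\varphi$ and Lemma~\ref{lemma3.1} does not work as stated: that lemma bounds $\lambda$, not $\dot\lambda/\lambda$, and the inequality $\dot\lambda/\lambda\leqslant 1/n$ in its proof holds only at spatial maxima of $\lambda$, which have no reason to coincide with maxima of $\dot\varphi$.

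For the lower bound, your inequality $\tfrac{d}{dt}\dot\varphi_{\min}\geqslant -c_0\,\dot\varphi_{\min}$ with a \emph{constant} $c_0$ gives only $\dot\varphi_{\min}(t)\geqslant\dot\varphi_{\min}(0)e^{-c_0t}\to 0$, useless as $T\to\infty$. The missing idea is that the hypothesis $\lambda^{1+\alpha}\lambda''\leqslant C$ combined with the lower bound $\lambda\geqslant\lambda(\inf u_0)e^{t/n}$ from Lemma~\ref{lemma3.1} yields $\lambda''\lambda\leqslant C\lambda^{-\alpha}\leqslant C'e^{-\alpha t/n}$; using also $v\geqslant 1$ and the already-proved upper bound $\dot\varphi\leqslant C_2$, the coefficient becomes exponentially decaying and hence integrable on $[0,\infty)$. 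The ODE comparison with $f'=-C''e^{-\alpha t/n}f$ then gives $\dot\varphi_{\min}\geqslant\dot\varphi_{\min}(0)\exp(-C''n/\alpha)=:C_1>0$, uniform in $T$. Relatedly, your claim that Lemma~\ref{lemma3.1} confines $u$ to a compact subinterval of $I^\circ$ is wrong for the purpose you need: $\lambda(u)\sim e^{t/n}$ forces $u\to\infty$, so any bound obtained by restricting $\lambda',\lambda''$ to a fixed compact interval is inherently $T$-dependent.
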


\begin{proof}
Set
$Q(\nabla\varphi,\nabla^2\varphi,\lambda'):=\frac{v^2}{n\lambda'-\widetilde{\sigma}^{ij}\varphi_{i,j}}$.
Differentiating both sides of the first evolution equation of
$(\widetilde{\sharp})$, it is easy to get that $\dot{\varphi}$
satisfies
\begin{eqnarray} \label{3.4}
\left\{
\begin{array}{lll}
\frac{\partial \dot{\varphi}}{\partial
t}=Q^{ij}\nabla_{ij}\dot{\varphi}+Q^{k}\nabla_k\dot{\varphi}-\frac{nv^2\lambda''\lambda\dot{\varphi}}{(n\lambda
- \widetilde{\sigma}^{ij}\varphi_{i,j})^2} \qquad \qquad & in ~M^{n}\times(0,T)\\
\nabla_ \mu \dot{\varphi}=0 \qquad\qquad & on ~\partial M^{n}\times(0,T)\\
\dot{\varphi}(\cdot,0)=\dot{\varphi}_0\qquad\qquad & on ~ M^{n}.
\end{array}
\right.
\end{eqnarray}
Similar to the argument in the proof of Lemma \ref{lemma3.1}, it
follows that $\dot{\varphi}$ must reach its maximum and the minimum
at interior points by applying Hopf's Lemma to (\ref{3.4}).
Therefore, at the point where $\dot{\varphi}$ gets its maximum, we
have
\begin{eqnarray}  \label{3.5}
\dot{\varphi}_i=0,   \qquad \dot{\varphi}_{i,j}\leqslant0.
\end{eqnarray}
Conversely, at the point where $\dot{\varphi}$ gets its minimum, we
have
\begin{eqnarray} \label{3.6}
\dot{\varphi}_i=0,   \qquad \dot{\varphi}_{i,j}\geqslant0.
\end{eqnarray}
Set
\begin{eqnarray*}
\dot{\varphi}_{max}:=\sup\limits_{M^n}\dot{\varphi}(\cdot,t), \qquad
\dot{\varphi}_{min}:=\inf\limits_{M^n}\dot{\varphi}(\cdot,t).
\end{eqnarray*}
Combining (\ref{3.4}), (\ref{3.5}) and the assumptions for the
warping function $\lambda$, we can obtain the evolution equation of
$\dot{\varphi}_{max}$ as follows
\begin{eqnarray*}
\frac{\partial \dot{\varphi}_{max}}{\partial
t}=Q^{ij}\nabla_{ij}\dot{\varphi}_{max}-\frac{n\lambda''\dot{\varphi}_{max}}{\lambda
H^2} \leqslant -\frac{n\lambda''\lambda
\dot{\varphi}^3}{v^2}\leqslant 0,
\end{eqnarray*}
which implies
\begin{eqnarray} \label{3.7}
\dot{\varphi}\leqslant \dot{\varphi}_{max}\leqslant
\sup\limits_{M^n}\dot{\varphi}(\cdot,0):=C_{2}.
\end{eqnarray}
On the other hand, since
$0<\lambda^{1+\alpha}(r)\lambda''(r)\leqslant C$, we have
\begin{eqnarray*}
0<\lambda''(r)\lambda(r)\leqslant Ce^{-\frac{\alpha}{n}t}
\end{eqnarray*}
by applying Lemma \ref{lemma3.1}. \emph{In general, the constant $C$
in the above inequality should be different from the one in the
assumption of Lemma \ref{lemma3.2}. However, for convenience, we use
the same symbol}. Then, combining (\ref{3.4}), (\ref{3.6}) and the
assumptions for $\lambda$, we have
\begin{eqnarray*}
\frac{\partial \dot{\varphi}_{min}}{\partial
t}=Q^{ij}\nabla_{ij}\dot{\varphi}_{min}-\frac{n\lambda''\lambda
\dot{\varphi}^3_{min}}{v^2}\geqslant -\frac{n\lambda''\lambda
\dot{\varphi}^3_{min}}{v^2}\geqslant
-Ce^{-\frac{\alpha}{n}t}\dot{\varphi}
\end{eqnarray*}
by applying Lemma \ref{lemma3.1}. The maximum principle tells us
that $\dot{\varphi}$ is bounded from below by the solution of
 ordinary differential equation (we write as ODE for short)
\begin{equation} \label{3.8}
\frac{\partial f}{\partial t}=-Ce^{-\frac{\alpha}{n}t}f,
\end{equation}
with $f(0):=\inf\limits_{M^{n}\subset N^n} \dot{\varphi}(\cdot,0)$.
By a straight calculation, we get the solution of the ODE
(\ref{3.8}) as follows
\begin{eqnarray*}
f=f(0)e^{\frac{C\alpha}{n}(e^{-\frac{\alpha}{n}t}-1)}\geqslant
C_{1}:=f(0)e^{-\frac{C\alpha}{n}}>0.
\end{eqnarray*}
Therefore, we have
\begin{eqnarray*}
\dot{\varphi}\geqslant \dot{\varphi}_{min}(t) \geqslant f(t)
\geqslant C_1.
\end{eqnarray*}
Together with (\ref{3.7}), the conclusion of Lemma \ref{lemma3.2}
follows.
\end{proof}

\begin{remark}
\rm{By the $C^0$-estimate, we know that IMCF preserves the convexity
during the evolving process, which implies $H>0$ for all
$t\in[0,T]$. This fact has been shown in \cite[Theorem 3.5]{Zh}.}
\end{remark}

\section{Gradient Estimate}

In this section, the gradient estimate will be shown.

\begin{lemma}\label{lemma4.1}
If $\varphi$ satisfies $(\widetilde{\sharp})$, $\lambda$ satisfies
$\lambda(r)>0$, $\lambda'(r)>0$,
$0\leqslant\lambda^{1+\alpha}(r)\lambda''(r)\leqslant C$ for some
positive constants $\alpha$, $C$ on $I^{\circ}$, and the Ricci
curvature of $N^n$ is nonnegative, then we have
\begin{eqnarray*}
|\nabla\varphi|\leqslant C_3,
\end{eqnarray*}
where $C_3$ is a nonnegative constant depending on
$\sup\limits_{M^{n}}\varphi(\cdot,0)$, i.e., the supremum of
$\varphi(x,t)$ at the initial time $t=0$.
\end{lemma}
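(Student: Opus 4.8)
The plan is to run a maximum-principle argument on an auxiliary quantity built from $|\nabla\varphi|^2$, or equivalently on $v^2 = 1+\lambda^{-2}|\nabla u|^2 = 1+|\nabla\varphi|^2$, following the standard scheme for star-shaped inverse curvature flows (as in Gerhardt \cite{cg}, Marquardt \cite{Ma2}, and Chen--Mao--Zhou \cite{cmz}), with the extra care required by the free boundary condition $\nabla_\mu\varphi=0$ on $\partial M^n$. Concretely, I would set $w:=\tfrac12|\nabla\varphi|^2$ (or a perturbation $w e^{g(\varphi)}$ for a suitable auxiliary function $g$ to absorb bad lower-order terms, exploiting the already-established $C^0$ bound on $\varphi$ from Lemma \ref{lemma3.1}) and compute its evolution equation from $(\widetilde\sharp)$, i.e. from $\dot\varphi = Q(\nabla\varphi,\nabla^2\varphi,\lambda') = \frac{v^2}{n\lambda'-\widetilde\sigma^{ij}\varphi_{i,j}}$. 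Differentiating, commuting covariant derivatives (which is where the Ricci curvature of $N^n$ enters, via the Bochner-type term $\sigma^{ij}R_{ik}\nabla^k\varphi\,\nabla^j\varphi$), and contracting, one obtains a parabolic inequality of the schematic form
\begin{eqnarray*}
\frac{\partial w}{\partial t} \leqslant Q^{ij}\nabla_{ij}w + (\text{lower order})\cdot w - c\,\mathrm{Ric}_N(\nabla\varphi,\nabla\varphi) + (\text{terms controlled by }\lambda,\lambda',\lambda''),
\end{eqnarray*}
where the $\mathrm{Ric}_N\geqslant 0$ hypothesis kills the dangerous term with a favorable sign and the $\lambda$-terms are bounded using $0<\lambda'\leqslant C$, $0\leqslant\lambda^{1+\alpha}\lambda''\leqslant C$ together with Lemma \ref{lemma3.1} (which gives $0<\lambda''\lambda\leqslant Ce^{-\alpha t/n}$, decaying in time). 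One then concludes $w$ (or $we^{g(\varphi)}$) cannot increase past its initial value at an interior spatial maximum.

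The second, and in this setting the essential, ingredient is to rule out that the maximum of $w$ is attained on the boundary $\partial M^n\times(0,T)$. Since we only have a Neumann-type condition on $\varphi$, one cannot invoke Hopf's lemma directly on $w$; instead I would follow the now-standard trick (used by Marquardt \cite{Ma2} and going back to Lieberman-type arguments for oblique boundary problems): at a boundary point compute $\nabla_\mu w$ and show, using $\nabla_\mu\varphi = 0$ on $\Sigma^n$, the \emph{convexity} of the cone $\Sigma^n$, and the fact that $\partial M^n$ is totally geodesic in $N^n$ with respect to $\mu$, that $\nabla_\mu w \leqslant 0$ (up to terms that can be absorbed). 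Differentiating the boundary condition tangentially yields $\nabla_i\nabla_\mu\varphi = 0$ along $\partial M^n$, and combining this with the second fundamental form of $\Sigma^n$ (which has a sign by convexity) produces the inequality $\nabla_\mu w \leqslant II^{\Sigma}(\nabla\varphi,\nabla\varphi)\leqslant 0$ or similar; hence by Hopf's lemma $w$ cannot have a boundary maximum unless it is constant. Therefore the interior analysis applies and $w\leqslant \sup_{M^n} w(\cdot,0)$, which gives $|\nabla\varphi|\leqslant C_3$ with $C_3$ depending on the initial data (and through the $C^0$ bound, on $\sup_{M^n}\varphi(\cdot,0)$).

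The main obstacle I expect is precisely the boundary term analysis: getting the correct sign of $\nabla_\mu w$ on $\partial M^n$ requires carefully relating the Neumann condition for $\varphi$, the second fundamental form of the supporting cone $\Sigma^n$, and the extrinsic geometry of $\partial M^n\subset N^n$; this is the place where the \emph{convexity} hypothesis on the cone is genuinely used (all the interior estimates only needed the warping-function bounds and $\mathrm{Ric}_N\geqslant0$). A secondary technical point is choosing the auxiliary function $g(\varphi)$ (or an additive term like $-A\varphi$) so that the zeroth- and first-order terms in the evolution inequality for the modified quantity have the right sign; here the uniform bounds $C_1\leqslant\dot\varphi\leqslant C_2$ from Lemma \ref{lemma3.2} and the time-decay $\lambda''\lambda\leqslant Ce^{-\alpha t/n}$ should make such a choice possible, so this part is routine once the boundary obstruction is handled.
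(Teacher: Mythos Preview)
Your proposal is correct and follows essentially the same route as the paper: set $\psi=\tfrac12|\nabla\varphi|^2$, compute its evolution from $(\widetilde\sharp)$, commute derivatives via the Ricci identity so that $\mathrm{Ric}_N\geqslant0$ kills the curvature term, bound the $\lambda''$ contribution using Lemma~\ref{lemma3.1}, and then show $\nabla_\mu\psi\leqslant0$ on $\partial M^n$ by exploiting $\nabla_\mu\varphi=0$ together with the convexity of $\Sigma^n$ (the second fundamental form of $\partial M^n$ appears exactly as you anticipate). The auxiliary function $we^{g(\varphi)}$ you mention as a contingency is not needed---the plain quantity $\psi$ already satisfies a clean parabolic inequality with a good zeroth-order sign, so the maximum principle applies directly without any modification.
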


\begin{proof}
Set $\psi=\frac{|\nabla\varphi|^2}{2}$. Then differentiating $\psi$
with respect to $t$ and together with $(\widetilde{\sharp})$, we
have
\begin{eqnarray*}
\frac{\partial \psi}{\partial t}=\frac{\partial}{\partial
t}\nabla_m\varphi\nabla^m\varphi=\nabla_m
\dot{\varphi}\nabla^m\varphi=\nabla_m Q\nabla^m\varphi,
\end{eqnarray*}
which is equivalent with
\begin{eqnarray*}
\frac{\partial \psi}{\partial
t}=\left(Q^{ij}\nabla_{ijm}\varphi+Q^k\nabla_{mk}\varphi-\frac{v^2\lambda^{\prime
\prime}\lambda\nabla_m\varphi}{(n\lambda^{\prime}-\tilde{\sigma}^{ij}\varphi_{i,j})^2}\right)\nabla^m\varphi.
\end{eqnarray*}
By straightforward calculation, we have
\begin{eqnarray} \label{4.1}
\frac{\partial \psi}{\partial
t}=Q^{ij}\nabla_{ijm}\varphi\nabla^m\varphi+Q^k\nabla_{km}\nabla^m\varphi-\frac{2n\lambda^{\prime
\prime}\psi}{\lambda H^2}
\end{eqnarray}
and
\begin{eqnarray}   \label{4.2}
\nabla_{ij}\psi=\nabla_j(\nabla_{mj}\varphi\nabla^m\varphi)&=&\nabla_{mij}\varphi\nabla^m\varphi+\nabla_{mi}\varphi\nabla^m_j\varphi\nonumber\\
&=&(\nabla_{ijm}\varphi+R^l_{imj})\nabla^m\varphi+\nabla_{mi}\nabla^m_j\varphi.
\end{eqnarray}
On the other hand, applying the Ricci identity, we can obtain
\begin{eqnarray} \label{4.3}
\nabla_{ijm}\varphi\nabla^m\varphi=\nabla_{ij}\psi-R^l_{imj}\nabla_l\varphi\nabla^m\varphi-\nabla_{mi}\varphi\nabla^m_j\varphi.
\end{eqnarray}
Combining (\ref{4.1}), (\ref{4.2}) and (\ref{4.3}) yields
\begin{eqnarray} \label{4.4}
\frac{\partial \psi}{\partial
t}=Q^{ij}\nabla_{ij}\psi+\left(Q^k-\frac{\nabla^k\psi}{\lambda^2v^2H^2}\right)\nabla_k
\psi-\frac{1}{\lambda^2H^2}\sigma^{ij}R_{limj}\nabla^l\varphi\nabla^m\varphi+  \nonumber\\
\frac{R_{limj}\nabla^i\varphi\nabla^j\varphi\nabla^l\varphi\nabla^m\varphi}{\lambda^2H^2v^2}-\frac{|\nabla^2\varphi|^2}{\lambda^2H^2}-\frac{2n\lambda''\psi}{\lambda
H^2}.  \qquad\qquad
\end{eqnarray}
Since the curvature tensor is antisymmetric with respect to the
indices $i$ and $j$, we have
$R_{limj}\nabla^l\varphi\nabla^i\varphi\nabla^m\varphi\nabla^j\varphi=0$.
Besides, the nonnegativity of the Ricci curvature yields
\begin{eqnarray*}
\sigma^{ij}R_{limj}\nabla^l\varphi\nabla^m\varphi=Ric(\nabla\varphi,\nabla\varphi)\geqslant0.
\end{eqnarray*}
By Lemmas \ref{lemma3.1}, \ref{lemma3.2} and the fact
$\dot\varphi=\frac{v}{\lambda H}$, we have
 \begin{eqnarray*}
 \lambda H^2=\lambda\left(\frac{v}{\lambda\dot\varphi}\right)^2&=&\frac{\lambda^2+|\nabla
 r|^2}{\lambda\dot\varphi^2}\\
 &\leqslant&\frac{\lambda^2+1}{\lambda\dot\varphi^2}\\
 &\leqslant&\frac{\lambda^2+1}{\lambda C_{1}^2},
 \end{eqnarray*}
 which implies
\begin{eqnarray*}
-\frac{2n\lambda''}{\lambda H^2}&\geqslant&-
\frac{2nC_{1}^2\lambda\lambda''}{\lambda^2+1}\geqslant
-2nC_{1}^2\lambda''\lambda^{-1}\geqslant-2nC_{1}^2C\lambda^{-(2+\alpha)}\\
&\geqslant&-2nC_{1}^2C \left(\lambda\left(\inf_{M^n}
u(\cdot,0)\right)\right)^{-(2+\alpha)}e^{-\frac{(2+\alpha)t}{n}}\\
 &=&-C_{4}e^{-\frac{(2+\alpha)t}{n}},
\end{eqnarray*}
where $C_4:=2nC_{1}^2C \left(\lambda\left(\inf\limits_{M^n}
u(\cdot,0)\right)\right)^{-(2+\alpha)}$. Putting the above three
facts into (\ref{4.4}) results in
\begin{eqnarray} \label{4.5}
\frac{\partial \psi}{\partial t}\leqslant
Q^{ij}\nabla_{ij}\psi+\left(Q^k-\frac{\nabla^k\psi}{\lambda^2v^2H^2}\right)\nabla_k
\psi-C_4e^{-\frac{(2+\alpha)t}{n}}\psi.
\end{eqnarray}
Choose an orthonormal frame $\{e_{1},\cdots,e_{n}\}$ at
$x\in\partial M^n$ such that $e_{1},\cdots,e_{n-1}\in T_{x}\partial
M^n$ and $e_{n}=\mu$, and then we can obtain
\begin{eqnarray*}
\nabla_ \mu
\psi&=&\nabla_{e_n}\psi=\sum_{i=1}^{n}\nabla_{e_{i},e_{n}}\varphi\nabla_{e_{i}}\varphi=\sum_{i=1}^{n-1}(\nabla_{e_i}\nabla_{e_n}\varphi-(\nabla_{e_i}e_n)\varphi)\nabla_{e_i}\varphi\\
&=&-\sum_{i=1}^{n-1}\langle\nabla_{e_i}e_n,e_j\rangle\nabla_{e_j}\varphi\nabla_{e_i}\varphi\\
&=&-\sum_{i=1}^{n-1}h_{ij}^{\partial{M}^n}\nabla_{e_i}\varphi\nabla_{e_j}\varphi\leqslant0,
\end{eqnarray*}
where $h_{ij}^{\partial{M}^n}$ is the second fundamental form of
$\partial M^n$. The last inequality holds because of the convexity
of the cone $\Sigma^n$. Therefore, together with (\ref{4.5}),we know
that $\psi$ satisfies
\begin{eqnarray*}
\left\{
\begin{array}{lll}
\frac{\partial \psi}{\partial t}\leqslant
Q^{ij}\nabla_{ij}\psi+\left(Q^k-\frac{\nabla^k\psi}{\lambda^2v^2H^2}\right)\nabla_k
\psi-C_4e^{-\frac{(2+\alpha)t}{n}}\psi
\qquad \qquad & in ~M^{n}\times(0,T)\\
\nabla_\mu \varphi \leqslant 0 \qquad\qquad & on ~\partial M^{n}\times(0,T)\\
\psi(\cdot,0)=\frac{|\nabla\varphi(\cdot,0)|^2}{2}=\frac{|\nabla\varphi_{0}|^2}{2}
\qquad\qquad & on ~ M^{n}.
\end{array}
\right.
\end{eqnarray*}
Then using the maximum principle, we have
$\psi\leqslant\sup\limits_{M^n}\frac{|\nabla\varphi_{0}|^2}{2}$,
which, together with Lemma \ref{lemma3.1}, yields the desired
gradient estimate.
\end{proof}

\section{Higher regularity and Convergence}

In this section, the convergence and the higher regularity of the
IMCF $(\sharp)$ will be discussed after the area-preserving
rescaling. We consider the rescaling $ \widehat F=\eta (t)F(x,t)$,
where $F$ is the parameterization of the graph $M^{n}_{t}$, and
$\eta(t)$ is the smooth function with respect to $t$ satisfying
\begin{eqnarray} \label{5.1}
\int_{\widehat{M}^{n}_{t}}d\widehat\mu =|M_0|,
\end{eqnarray}
where $\widehat{M}^{n}_{t}$ is the rescaled hypersurface,
$d\widehat{\mu}$ is the volume element of $\widehat{M}^{n}_{t}$,
$|M_0|$ denotes the area of the initial hypersurface $M_0$. Recall
that the induced metric and the second fundamental form of
$M^{n}_{t}$ are given by
\begin{eqnarray*}
g_{ij}=\left\langle\frac{\partial F}{\partial x^{i}},\frac{\partial
F}{\partial x^{j}}\right\rangle, \qquad\qquad
h_{ij}=\left\langle\vec{\nu},\frac{\partial^2 F}{\partial x^i
\partial x^j}\right\rangle,
\end{eqnarray*}
so the corresponding induced metric and the second fundamental form
of $\widehat{M}^{n}_{t}$ should be
\begin{eqnarray*}
\widehat{g}_{ij}=\eta^{2}g_{ij}, \qquad
\widehat{h}_{ij}=\eta^{2}h_{ij}, \qquad \widehat{g}^{ij}=\eta^{-2}
g^{ij}.
\end{eqnarray*}
Differentiating both sides of (\ref{5.1}) and then we can get
\begin{eqnarray*}
\frac{d}{dt}\int _{\widehat{M}^{n}_{t}}d\widehat\mu=\int
_{M^n_t}\frac{\widehat{g}^{ij}\frac{\partial}{\partial
t}\widehat{g}_{ij}}{2}d\mu=\int
_{M^n_t}\left(\eta\eta'g_{ij}+\frac{1}{H}h_{ij}\eta^2\right)g^{ij}\eta^{-2}d\mu=0,
\end{eqnarray*}
which implies that
\begin{eqnarray*}
\int _{M^n_t} \left(n\eta^{-1}\eta' +1 \right)d\mu =0.
 \end{eqnarray*}
Therefore, we have
\begin{eqnarray*}
n\eta^{-1}\eta'+1=0,
\end{eqnarray*}
and then solving the above ODE, together with $\eta(0)=1$, yields
$\eta(t)=e^{-\frac{1}{n}t}$.

In order to get the long time existence for the IMCF $(\sharp)$, we
do the rescaling $\widehat{u}=ue^{-\frac{1}{n}t}$ for the graphic
function $u(x,t)$ of the evolving hypersurface $M^n_t$ in the base
part of the warped product $I\times_{\lambda(r)}N^{n}$. Through this
process, we can obtain the following result.

\begin{lemma} \label{lemma5.1}
Let $u$ be an admissible solution of $(\widetilde{\sharp})$ and let
$\Sigma ^{n}$ be a smooth, convex cone. If $\lambda$ satisfies
$\lambda(r)>0$, $0<\lambda'(r)\leqslant C$,
$0\leqslant\lambda^{1+\alpha}(r)\lambda''(r)\leqslant C$ for some
positive constants $\alpha$, $C$ on $I^{\circ}$, and the Ricci
curvature of $N^n$ is nonnegative, then there exist some $\beta
>0 $ and some $D>0$ such that
\begin{eqnarray*}
[\nabla \widehat{u}]_{\beta}+[\partial{\widehat u}/\partial
t]_{\beta}+[\widehat H]_{\beta} \leqslant D \left(\parallel
u_0\parallel_{C^{2+\alpha}(M^{n})}, n, \beta, M^{n}\right)
\end{eqnarray*}
where $[f]_{\beta}:=[f]_{x,\beta}+[f]_{t,\beta/2}$ is the sum of the
H\"older coefficients of $f$ with respect to $x$ and $t$ in the
domain $M^{n} \times [0,T]$.
\end{lemma}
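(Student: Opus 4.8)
The plan is to collect the a priori bounds already established—namely the two-sided control of $\lambda(u(x,t))e^{-t/n}$ from Lemma \ref{lemma3.1}, the bounds $C_1\leqslant\dot\varphi\leqslant C_2$ from Lemma \ref{lemma3.2}, and the gradient bound $|\nabla\varphi|\leqslant C_3$ from Lemma \ref{lemma4.1}—and translate them into uniform $C^1$-type bounds for the rescaled graph function $\widehat u=ue^{-t/n}$, then upgrade to a uniform parabolic Hölder estimate. First I would observe that, since $\varphi(x,t)=\int_c^{u(x,t)}\lambda(s)^{-1}ds$ and $\lambda(u)e^{-t/n}$ is pinched between two positive constants, the quantity $u(x,t)$ itself is controlled in terms of $t$ in such a way that $\widehat u=ue^{-t/n}$ stays in a fixed compact subinterval of $I$ (this is where the hypothesis $0<\lambda'\leqslant C$ enters: it lets one pass between $\lambda$-bounds and $u$-bounds). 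Differentiating $\widehat u$ gives $\partial_t\widehat u=(\dot u-\tfrac1n u)e^{-t/n}$ and $\nabla\widehat u=e^{-t/n}\nabla u=e^{-t/n}\lambda\nabla\varphi$; combining Lemma \ref{lemma3.2} (which bounds $\dot u=\lambda\dot\varphi$) with Lemma \ref{lemma4.1} and the $\lambda$-bounds yields uniform bounds $|\nabla\widehat u|\leqslant C$ and $|\partial_t\widehat u|\leqslant C$ on $M^n\times[0,T]$, with constants independent of $T<T^\ast$. Likewise $\widehat H=\eta^{-1}H=e^{t/n}H$, and since $\dot\varphi=v/(\lambda H)$ together with the already-established two-sided bound on $\dot\varphi$, the lower and upper pinching of $\lambda e^{-t/n}$, and $v=\sqrt{1+\lambda^{-2}|\nabla u|^2}$ being bounded, one gets $\widehat H$ bounded above and below away from zero.

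Next I would feed these bounds into the parabolic equation. In the rescaled variables the evolution equation $(\widetilde\sharp)$ becomes a scalar, uniformly parabolic quasilinear equation for $\widehat u$ (or equivalently for $\widehat\varphi$) on the fixed manifold $M^n$, with the oblique (Neumann-type) boundary condition $\nabla_\mu\widehat u=0$ on $\partial M^n$ inherited from $\nabla_\mu\varphi=0$ and the convexity of the cone. Uniform parabolicity is exactly the content of the $C^1$-bounds just obtained: the coefficient matrix $Q^{ij}$ is controlled between two multiples of $g^{ij}$, which by \eqref{2.2} is itself pinched once $|\nabla u|$ and $\lambda$ are controlled. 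One then applies the interior-and-boundary Hölder estimate of Krylov–Safonov / Lieberman type for quasilinear uniformly parabolic equations with conormal oblique boundary conditions (this is the same tool invoked in \cite{Ma2}): it produces an exponent $\beta>0$ and a constant $D$, depending only on $n$, the geometry of $M^n$ and $\Sigma^n$, the ellipticity constants, and $\|u_0\|_{C^{2+\alpha}(M^n)}$, such that $[\nabla\widehat u]_\beta$, $[\partial_t\widehat u]_\beta$, and—after differentiating the equation once and using that $\widehat H$ is a smooth function of $\nabla\widehat u$ and $\nabla^2\widehat u$ with the latter now Hölder—$[\widehat H]_\beta$ are all bounded by $D$.

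The main obstacle, as usual in this kind of argument, is the boundary: one must verify that the Neumann condition $\nabla_\mu\varphi=0$ remains a \emph{genuinely oblique} (uniformly regular) boundary condition after rescaling and under the graph representation, so that the boundary Hölder estimates apply, and one must check that the convexity of $\Sigma^n$ enters correctly—this is what was used in Lemma \ref{lemma4.1} to give $\nabla_\mu\psi\leqslant0$, and an analogous structural fact is needed here to rule out blow-up of the Hölder seminorms at $\partial M^n$. I would handle this exactly as in \cite{Ma2}: reflect or use boundary-flattening coordinates adapted to $\Sigma^n$, note that the perpendicularity condition together with the convex supporting cone gives the required obliqueness with uniform constants, and then invoke the standard parabolic Schauder-type boundary estimates. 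The remaining steps—extracting the ellipticity constants, differentiating the equation, bookkeeping the dependence of $D$—are routine once the uniform $C^1$-bounds and the obliqueness are in hand.
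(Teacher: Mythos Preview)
Your outline differs from the paper's proof in a way that leaves a genuine gap. You propose to apply a Krylov--Safonov / Lieberman black box to the quasilinear parabolic equation for $\widehat u$ and read off $[\nabla\widehat u]_\beta$ directly. But Krylov--Safonov theory for non-divergence equations yields H\"older continuity of the \emph{solution}, not of its gradient; gradient H\"older estimates require either divergence structure (De Giorgi--Nash--Moser) or prior control of second derivatives, neither of which you have established. Relatedly, your sentence ``$\widehat H$ is a smooth function of $\nabla\widehat u$ and $\nabla^2\widehat u$ with the latter now H\"older'' is circular: at this point in the argument no H\"older bound on $\nabla^2\widehat u$ is available---that is the content of Lemma~\ref{lemma5.2}, which \emph{uses} the present lemma.

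The paper circumvents both issues by splitting the estimate and exploiting divergence structure explicitly. For the spatial part $[\nabla\varphi]_{x,\beta}$, one freezes $t$ and rewrites $(\widetilde\sharp)$ as the elliptic prescribed-mean-curvature equation
\[
\mathrm{div}_\sigma\Bigl(\tfrac{\nabla\varphi}{\sqrt{1+|\nabla\varphi|^2}}\Bigr)=\tfrac{n\lambda'}{\sqrt{1+|\nabla\varphi|^2}}-\tfrac{\sqrt{1+|\nabla\varphi|^2}}{\dot\varphi}
\]
with Neumann data; the right-hand side is bounded by Lemmas~\ref{lemma3.2} and~\ref{lemma4.1} together with $\lambda'\leqslant C$, so Morrey-type interior and boundary estimates from \cite{La2} give $\nabla\varphi\in C^{\beta}_x$. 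The mixed seminorm $[\nabla\widehat u]_{t,\beta/2}$ then follows from an interpolation lemma in \cite{La1}. For $[\dot\varphi]_\beta$, the paper rewrites the evolution of $\dot\varphi$ in \emph{divergence form} with respect to the rescaled metric,
\[
\partial_t\dot\varphi=\mathrm{div}_{\widehat g}\bigl(\widehat H^{-2}\nabla\dot\varphi\bigr)-2\dot\varphi^{-1}\widehat H^{-2}|\nabla\dot\varphi|^2_{\widehat g}-n\lambda\lambda''\dot\varphi^3 v^{-2},
\]
and runs an energy argument with the test function $\chi=\xi^2\dot\varphi$ to obtain a local Caccioppoli inequality, after which the parabolic De Giorgi--Nash theory in \cite{La1} applies. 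Finally---and this is the point you over-engineered---$[\widehat H]_\beta$ is obtained \emph{algebraically} from the identity $\dot\varphi\,\lambda e^{-t/n}\,\widehat H=v=\sqrt{1+|\nabla\varphi|^2}$, using only the H\"older bounds on $\dot\varphi$, $\lambda$, and $\nabla\varphi$ already in hand; no differentiation of the equation and no second-derivative control is needed.
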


\begin{proof}
First, we try to have the priori estimates for $|\nabla \varphi|$
and $|\partial \varphi/\partial t|$. That is because the priori
estimates for $|\nabla \varphi|$ and $|\partial \varphi/\partial t|$
imply a bound for $[\widehat u]_{x,\beta}$ and $[\widehat
u]_{t,\beta/2}$, which, together with \cite[Chapter 2, Lemma
3.1]{La1}, can give the bound for $[\nabla \widehat u]_{t,\beta/2}$
provided a bound for $[\nabla\widehat{u}]_{x,\beta}$ obtained.
Since, after rescaling, $|\nabla\widehat u|$ and $\partial \widehat
u /\partial t$ can be written as
\begin{eqnarray*}
\nabla \widehat u=\nabla u \cdot e^{-t/n}, \qquad \dot
{\widehat u}=\dot
ue^{-\frac{1}{n}t}-\frac{1}{n}\widehat{u},
\end{eqnarray*}
which implies $\nabla \widehat u=\lambda \nabla \varphi \cdot
e^{-\frac{\alpha}{n}t}$. Then it is sufficient to bound $[\nabla
\varphi]_{x,\beta}$ if one wants to bound
$[\nabla\widehat{u}]_{x,\beta}$. In order to get this bound, we fix
$x$ and rewrite the first evolution equation of
$(\widetilde{\sharp})$ as follows
\begin{eqnarray}  \label{5.2}
\mathrm{div}_{\sigma}\left(\frac{\nabla \varphi}{\sqrt{1+|\nabla
\varphi|^2}}\right)=\frac{n\lambda'}{\sqrt{1+|\nabla
\varphi|^2}}-\frac{\sqrt{1+|\nabla \varphi|^2}}{\dot \varphi},
\end{eqnarray}
which is an elliptic PDE endowed with the Neumann boundary condition
$\nabla_\mu \varphi =0$. Clearly, the RHS of (\ref{5.2}) is bounded
since $\dot\varphi$, $\nabla\varphi$ are bounded (see Lemmas
\ref{lemma3.2} and \ref{lemma4.1} respectively) and
$0<\lambda'\leqslant C$. Besides, the RHS of (\ref{5.2}) is also a
measurable function in $x$. Therefore, by similar calculations to
those in \cite{La2}, Chapter 4, $\S6$ (interior estimate) and
Chapter 10, $\S2$ (boundary estimate), we can get a Morrey estimate,
which yields the estimate for $[\nabla \varphi]_{x,\beta}$.

Note that $\dot {\widehat u}=\lambda \dot \varphi e^{-\alpha t/n}$.
So, we need to bound $\dot \varphi$ if we want to get a bound for
$[\partial \widehat u /\partial t]_{\beta}$. By the first equation
of (\ref{3.4}), it is not difficult to get the evolution equation
for $\dot\varphi$ with respect to the induced rescaled metric as
follows
\begin{eqnarray*}
\frac{\partial{\dot \varphi}}{\partial t}=\mathrm{div}_{\widehat
g}\left(\frac{\nabla \dot \varphi}{{\widehat H}^2}\right)-2\dot
\varphi^{-1}\frac{|\nabla \dot \varphi|^2_{\widehat g}}{{\widehat
H}^2}-n\frac{\lambda\lambda''{\dot \varphi}^3}{v^2}.
\end{eqnarray*}
Note that the Neumann condition $\nabla _{\mu}\dot \varphi =0$
implies that the interior and boundary estimates are basically the
same. Then, together with the strict positivity of $\dot \varphi$,
we can define the test function $\chi=\xi^2\dot\varphi$ and the
support of $\xi$ has to be chosen away from the boundary for the
interior estimate. Integration by parts and Young's inequality
results in
\begin{eqnarray*}
&&\frac{1}{2}\parallel \dot \varphi \xi
\parallel^{2}_{2,M_{t}^n}\Big{|}_{t_0}^{t_1}+\frac{1}{\max{\widehat H}^2}\int ^{t_1}_{t_0}\int _{M^n_t}{\xi}^2|\nabla \dot \varphi|^2d\mu_t
dt\leqslant \\
 && \qquad \qquad \qquad \int ^{t_1}_{t_0}\int _{M^n_t}\left[\dot
{\varphi}^2\xi\left|\frac{\partial \xi}{\partial
t}\right|+\frac{{\xi}^2|\nabla \dot \varphi|^2}{2\max{\widehat
H}^2}+\frac{2\max{\widehat H}^2\dot{\varphi}^2|\nabla
\xi|^2}{\min{\widehat H}^4}\right],
\end{eqnarray*}
which implies
\begin{eqnarray*}
&&\frac{1}{2}\parallel \dot \varphi \xi
\parallel^{2}_{2,M_{t}^n}\Big{|}_{t_0}^{t_1}+\frac{1}{2\max{\widehat H}^2}\int ^{t}_{t_0}\int _{M^n_t}{\xi}^2|\nabla \dot \varphi|^2d\mu_t
dt\leqslant \\
 && \qquad \qquad \qquad \left(1+\frac{2\max{\widehat H}^2}{\min{\widehat
H}^4}\right)\int ^{t_1}_{t_0}\int _{M^n_t}\dot
{\varphi}^2\left[\xi\left|\frac{\partial \xi}{\partial
t}\right|+|\nabla \xi|^2\right],
\end{eqnarray*}
 where, as before, $\max(\cdot)$ and $\min(\cdot)$
denote the supremum and the infimum of a prescribed quantity over
$M^{n}\subset N^n$ respectively. Then, similar to \cite{La1},
Chapter 5, $\S1$ (interior estimate) and $\S7$ (boundary estimate),
the boundedness for $[\dot \varphi]_{\beta}$ can be obtained, and
moreover, all local interior and boundary estimates are independent
of $T$.

The estimate of $\widehat H$ follows from the estimates for
$\lambda$, $\nabla \varphi$, $\dot \varphi$ and the identity
$\dot\varphi\lambda e^{-t/n}\widehat{H}=v=\sqrt{1+|\nabla
\varphi|^2}$.
\end{proof}

Applying Lemma \ref{lemma5.1}, we can get the following higher-order
estimates.
\begin{lemma} \label{lemma5.2}
Let $u$ be an admissible solution of $(\widetilde{\sharp})$ and let
$\Sigma ^{n}$ be a smooth, convex cone. If $\lambda$ satisfies
$\lambda(r)>0$, $0<\lambda'(r)\leqslant C$,
$0\leqslant\lambda^{1+\alpha}(r)\lambda''(r)\leqslant C$ for some
positive constants $\alpha$, $C$ on $I^{\circ}$, and the Ricci
curvature of $N^n$ is nonnegative, then for every $t_0 \in (0,T)$,
there exists some $\beta>0$ such that
\begin{eqnarray*}
\parallel \widehat u \parallel _{C^{2+\beta,1+\beta/2}(M^{n}\times[0,T])}\leqslant D\left(\parallel u_0 \parallel_{C^{2+\alpha}(M^n)},n,\beta,M^n\right)
\end{eqnarray*}
and
\begin{eqnarray*}
\parallel \widehat u \parallel _{C^{2k+\beta,k+\beta/2}(M^{n}\times[t_0,T])}\leqslant D\left(\parallel
u(\cdot,t_0)
\parallel_{C^{2+\alpha}(M^n)},n,\beta,M^n\right).
\end{eqnarray*}
\end{lemma}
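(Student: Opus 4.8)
The plan is to regard the rescaled graph function $\widehat u=u\,e^{-t/n}$ as a solution of a scalar quasilinear parabolic equation with an oblique (Neumann) boundary condition, and then to run a parabolic Schauder bootstrap. First, undoing the substitution $\varphi=\int_{c}^{u}\frac{1}{\lambda(s)}\,ds$ in the first line of $(\widetilde\sharp)$ and rescaling turns it into an equation of the form $\partial_t\widehat u=a^{ij}(x,\widehat u,\nabla\widehat u)\,\nabla_{ij}\widehat u+b(x,t,\widehat u,\nabla\widehat u)$, together with $\nabla_\mu\widehat u=0$ on $\partial M^n$ and $\widehat u(\cdot,0)=u_0$. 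By Lemmas \ref{lemma3.1}, \ref{lemma3.2} and \ref{lemma4.1}, the quantities $\lambda$, $\lambda'$, $v=\sqrt{1+|\nabla\varphi|^2}$ and $H$ are bounded above on $[0,T]$ and $H$ is bounded away from $0$, with all bounds independent of $T$; hence $(a^{ij})$ is uniformly elliptic and $a^{ij},b$ are bounded. Moreover, Lemma \ref{lemma5.1} already provides H\"older bounds for $\nabla\widehat u$ and $\partial_t\widehat u$ on $M^n\times[0,T]$, so, freezing the gradient argument, the coefficients $a^{ij}(\cdot)$ and the inhomogeneity $b(\cdot)$ become H\"older continuous functions of $(x,t)$ with norms controlled by the previous lemmas.

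Next I would invoke linear parabolic Schauder theory for this frozen-coefficient equation, both in the interior of $M^n$ and up to the boundary $\partial M^n$, where the condition $\nabla_\mu\widehat u=0$ is a smooth, uniformly oblique first-order condition because $\Sigma^n$ is smooth and convex (after a local diffeomorphism flattening $\partial M^n$ the obliqueness constant does not degenerate). The zeroth-order compatibility condition at $t=0$ is precisely the orthogonality hypothesis $\langle\mu\circ F_0,\vec{\nu}_0\circ F_0\rangle|_{\partial M^n}=0$ of Theorem \ref{main1.1}, and $u_0\in C^{2+\alpha}(M^n)$; combining interior and boundary Schauder estimates (as in \cite{La1}) by a partition of unity on the compact manifold-with-boundary $M^n$ then yields, for some $\beta\in(0,\alpha]$, the bound $\|\widehat u\|_{C^{2+\beta,1+\beta/2}(M^n\times[0,T])}\leqslant D(\|u_0\|_{C^{2+\alpha}(M^n)},n,\beta,M^n)$, with $D$ independent of $T$ since every input estimate is.

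Finally, for the higher estimates on $M^n\times[t_0,T]$ with $t_0>0$, I would bootstrap. With $\widehat u\in C^{2+\beta,1+\beta/2}$ in hand, differentiating the evolution equation in a spatial direction shows that each $w=D_k\widehat u$ solves a linear parabolic equation whose coefficients now involve $\nabla^2\widehat u$ and are therefore of class $C^{\beta,\beta/2}$, while the boundary condition differentiates into another oblique condition; parabolic smoothing on $[t_0,T]$ removes any compatibility requirement at the positive initial time $t_0$, at the expense of letting $D$ depend on $\|u(\cdot,t_0)\|_{C^{2+\alpha}(M^n)}$, and Schauder again gives $\widehat u\in C^{4+\beta,2+\beta/2}$. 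Iterating this $k$ times produces $\|\widehat u\|_{C^{2k+\beta,k+\beta/2}(M^n\times[t_0,T])}\leqslant D(\|u(\cdot,t_0)\|_{C^{2+\alpha}(M^n)},n,\beta,M^n)$ for every integer $k\geqslant 1$. The main technical obstacle is the boundary analysis: one must check that the Neumann condition stays uniformly oblique under the flattening change of variables, verify the order-zero compatibility at $t=0$ for the first estimate, and carry the $T$-independence through each Schauder iteration; once the boundary case is settled the interior iteration is routine.
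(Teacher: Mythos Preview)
Your bootstrap strategy is the same one the paper uses (the paper works with $\varphi$, rewrites the flow as $\dot\varphi=\widehat H^{-2}\Delta_{\widehat g}\varphi+\text{l.o.t.}$, freezes the coefficients via Lemma~\ref{lemma5.1}, applies the linear Schauder theory of \cite{La1,Li}, and then differentiates in $t$ and in space to iterate). Two inaccuracies in your write-up should be cleaned up, though neither is fatal.

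First, the scalar equation for $\widehat u$ (or for $\varphi$) is \emph{not} quasilinear with $a^{ij}=a^{ij}(x,\widehat u,\nabla\widehat u)$: in $(\widetilde\sharp)$ the Hessian sits in the denominator, so after the usual self-linearization one has $a^{ij}=\dot\varphi^{2}v^{-2}\widetilde\sigma^{ij}$, i.e.\ the principal part carries a factor of $\dot\varphi$ (equivalently $\widehat H$). Your ``freeze the gradient'' step is therefore not enough; you must also freeze $\dot\varphi$ (or $\widehat H$). Fortunately Lemma~\ref{lemma5.1} supplies the needed H\"older bound on $\partial_t\widehat u$ and on $\widehat H$, so the conclusion survives---but the dependence should be stated correctly.

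Second, the assertion that $\lambda$ and $H$ are bounded on $[0,T]$ with bounds independent of $T$, and that $H$ is bounded away from $0$, is false: by Lemma~\ref{lemma3.1} one has $\lambda\sim e^{t/n}$, and from $\dot\varphi=v/(\lambda H)$ together with Lemma~\ref{lemma3.2} one gets $H\sim e^{-t/n}$. What is uniformly controlled are the rescaled quantities $\widehat\lambda=\lambda e^{-t/n}$ and $\lambda H=v/\dot\varphi$, and it is exactly these combinations that appear in $a^{ij}$ and $b$ once the equation is written in the rescaled variables. Phrase the uniform ellipticity in terms of $\widehat\lambda$ and $\lambda H$ (as the paper implicitly does through $\widehat H$ and $\widehat g$) and your argument matches the paper's.
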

\begin{proof}
Since $\varphi(x,t)=\int_{c}^{u(x,t)}\frac{1}{\lambda(s)}ds$, we
know that the bound of $\varphi$ leads to the bound of $u$. So, we
try to estimate $\varphi$. Rewrite $(\widetilde\sharp)$ as follows
\begin{eqnarray} \label{5.3}
\frac{\partial \varphi}{\partial t}=\frac{1}{\widehat H^2}\Delta
_{\widehat g}\varphi +\left(\frac{2\sqrt{1+|\nabla
\varphi|^2}}{\widehat \lambda \widehat H}-\frac{n
\lambda'}{{\widehat \lambda}^2 {\widehat H}^2}\right)
\end{eqnarray}
By Lemma \ref{lemma5.1}, we know that (\ref{5.3}) is a uniformly
parabolic PDE with H\"{o}lder continuous coefficients. Therefore, by
\cite[Chapter IV, Theorem 5.3]{La1}, which shows $\varphi$ is
$C^{2+\beta,1+\beta/2}$, and the linear theory in \cite[Chapter
4]{Li}, which implies the second-order bound, we have
\begin{eqnarray} \label{5.4}
\parallel \widehat u \parallel _{C^{2+\beta,1+\beta/2}}\leqslant D\left(\parallel u_0
\parallel_{C^{2+\alpha}(M^n)},n,\beta,M^n\right).
\end{eqnarray}
Differentiating both sides of (\ref{5.3}) with respect to $t$ and
$\xi_{i}$, $1\leqslant i \leqslant n$, respectively, one can easily
get evolution equations of $\dot\varphi$ and $\varphi_i$
respectively, which, using the estimate (\ref{5.4}), can be treated
as uniformly parabolic PDEs on the time interval $[t_0,T]$. At the
initial time $t_0$, all compatibility conditions are satisfied and
the initial function $\varphi(\cdot,t_0)$ is smooth, which implies a
$C^{3+\beta,(3+\beta)/2}$ estimate for $\varphi_i$ and a
$C^{1+\beta,1+\beta/2}$ estimate for $\dot\varphi$. So, we have the
$C^{4+\beta,2+\beta/2}$ estimate for $\widetilde{u}$. From
\cite[Chapter 4, Theorem 4.3, Exercise 4.5]{Li} and the above
argument, it is not difficult to know that the constant are
independent of $T$. Higher regularity can be proven by induction
over $k$.
\end{proof}

Then the long-time existence and the convergence can be discussed.

\begin{lemma} \label{lemma5.3}
let $\Sigma ^{n}$ be a smooth, convex cone. Let $u$ be an admissible
solution of $(\widetilde{\sharp})$ and let $T^\ast$ be the maximal
existence time. Then $T^{\ast}=\infty$, and the rescaled solution
$\widetilde{F}(\cdot,t)=F(\cdot,t)e^{-\frac{1}{n} t}$
($F(\cdot,t)$ is the embedding map mentioned in Theorem
\ref{main1.1}) converges smoothly to an embedding $F_{\infty}$,
mapping $M^n$ into a piece of a geodesic sphere.
\end{lemma}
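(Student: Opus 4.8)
The plan is to combine the a priori estimates of Sections 3–4 with the higher-regularity bounds of Lemmas \ref{lemma5.1}–\ref{lemma5.2} to first upgrade short-time existence to long-time existence, and then to extract a smooth limit of the area-normalized flow. For long-time existence: suppose $T^{\ast}<\infty$. By Lemmas \ref{lemma3.1}, \ref{lemma3.2}, \ref{lemma4.1} we have uniform $C^0$ and gradient bounds for $\varphi$ on $[0,T^{\ast})$, and Lemma \ref{lemma5.2} gives a uniform bound $\parallel \widehat u\parallel_{C^{2+\beta,1+\beta/2}(M^n\times[0,T^{\ast}))}$, hence a uniform bound on $\parallel \varphi\parallel_{C^{2+\beta,1+\beta/2}}$ on any $[0,T]$ with $T<T^{\ast}$, with constants independent of $T$. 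Consequently $\varphi(\cdot,t)$ converges in $C^{2+\beta'}(M^n)$ as $t\to T^{\ast}$ to some $\varphi(\cdot,T^{\ast})$, which is a legitimate admissible initial datum (in particular $H>0$ is preserved by the Remark after Lemma \ref{lemma3.2}, and the orthogonality boundary condition passes to the limit by continuity). Applying the short-time existence result of Lemma \ref{lemma2.1} with initial data $\varphi(\cdot,T^{\ast})$ extends the solution past $T^{\ast}$, contradicting maximality. Therefore $T^{\ast}=\infty$.

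For the convergence: work with the rescaled quantities $\widehat u=ue^{-t/n}$, $\widehat g_{ij}=\eta^2 g_{ij}$ with $\eta=e^{-t/n}$, for which the area $\int_{\widehat M^n_t}d\widehat\mu=|M_0|$ is constant in $t$. The strategy is to show that the rescaled second fundamental form becomes umbilic in the limit. First I would monitor a scale-invariant quantity such as $\widehat H^2 - n^2\kappa$ (where $n\kappa$ is the limiting mean curvature of the round piece, determined by the fixed area and the limiting warping value $\lambda(u)e^{-t/n}$ pinned by Lemma \ref{lemma3.1}) or, more robustly, the traceless second fundamental form $|\mathring{\widehat h}|^2 = |\widehat h|^2 - \tfrac1n \widehat H^2$. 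Using the evolution equations for $\widehat H$ and $|\widehat h|^2$ under IMCF, together with the nonnegativity of the Ricci curvature of $N^n$ and the decay estimates $0<\lambda''\lambda\leqslant Ce^{-\alpha t/n}$ and $C_1\leqslant\dot\varphi\leqslant C_2$ already established, one derives a differential inequality of the form $\tfrac{d}{dt}\!\int_{\widehat M^n_t}|\mathring{\widehat h}|^2\,d\widehat\mu \leqslant -c\!\int_{\widehat M^n_t}|\mathring{\widehat h}|^2\,d\widehat\mu + Ce^{-\delta t}$ for positive constants $c,\delta$, possibly after absorbing boundary terms (these are controlled by the convexity of $\Sigma^n$ exactly as in the gradient estimate). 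Integrating gives $\int|\mathring{\widehat h}|^2 d\widehat\mu\to 0$; interpolating against the uniform higher-order bounds of Lemma \ref{lemma5.2} promotes this to $\sup|\mathring{\widehat h}|\to 0$ and indeed to smooth convergence of $\widehat h$ to an umbilic tensor. Since umbilic hypersurfaces with boundary meeting a cone orthogonally in a warped product of this type are pieces of geodesic spheres, and the area is fixed, $\widehat u(\cdot,t)$ converges smoothly to a limit graph function $u_\infty$ describing a piece of a geodesic sphere; the convergence of $\widetilde F(\cdot,t)=F(\cdot,t)e^{-t/n}$ to the embedding $F_\infty$ follows, with the full $C^\infty$ convergence coming from the $T$-independent estimates of Lemma \ref{lemma5.2} and parabolic smoothing.

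The main obstacle is establishing the decay of the traceless second fundamental form uniformly up to the boundary $\partial M^n$: the evolution equation for $|\mathring{\widehat h}|^2$ produces a boundary term involving $h^{\partial M^n}_{ij}$ and the normal derivative of $\widehat h$, and one must show that the convexity of the cone $\Sigma^n$ makes this term have a favorable sign (or is at worst exponentially small), analogously to the computation $\nabla_\mu\psi=-\sum h^{\partial M^n}_{ij}\nabla_{e_i}\varphi\nabla_{e_j}\varphi\leqslant 0$ used in Lemma \ref{lemma4.1}. A secondary subtlety is that the ambient space is only assumed to have nonnegative Ricci curvature on the base $N^n$ (not a pointwise curvature condition), so the reaction terms coming from the ambient curvature in the Simons-type identity for $|\widehat h|^2$ must be handled using the specific warped-product structure and the decay $\lambda''\lambda\leqslant Ce^{-\alpha t/n}$ rather than a sign condition; the computation of these ambient curvature terms and verifying they do not destroy the differential inequality is where the bulk of the work lies. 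Once the $L^2$ (or $L^1$) decay of $|\mathring{\widehat h}|^2$ is in hand, the remaining steps — interpolation to $C^\infty$ convergence and identification of the limit as a geodesic sphere piece — are routine given Lemma \ref{lemma5.2}.
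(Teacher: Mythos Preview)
Your long-time existence argument is essentially the paper's: uniform $C^{2+\beta,1+\beta/2}$ bounds from Lemma~\ref{lemma5.2} prevent blow-up at $T^\ast$, the solution extends continuously to $t=T^\ast$, and Lemma~\ref{lemma2.1} restarts the flow, contradicting maximality.

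For the convergence, however, you take a substantially harder route than the paper. You propose an $L^2$ monotonicity argument for the traceless rescaled second fundamental form $|\mathring{\widehat h}|^2$, deriving a differential inequality via a Simons-type identity, controlling boundary contributions through cone convexity, and then interpolating against the higher-order bounds. The paper instead exploits the graphical parametrization throughout: from Lemmas~\ref{lemma3.1} and~\ref{lemma4.1} it extracts exponential decay of $|Du|$ (hence of $|D\widehat u|$), so Arzel\`a--Ascoli forces every subsequence of $\widehat u$ to converge in $C^1$ to a \emph{constant} $\omega_\infty$; a short induction using the uniform $C^{k+\beta}$ bounds of Lemma~\ref{lemma5.2} upgrades this to $C^\infty$. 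Umbilicity of the limit is then read off directly from the explicit formula $h^i_j=\frac{1}{\lambda v}(\lambda'\delta^i_j-\widetilde\sigma^{ik}\varphi_{kj})$, yielding $|\widehat h^i_j-\delta^i_j|\leqslant C_5 e^{-\delta t}$ without ever writing down an evolution equation for the second fundamental form. This completely sidesteps the boundary computation for $\nabla_\mu|\mathring{\widehat h}|^2$ that you correctly identify as your main obstacle---and that obstacle is genuine, since it requires a second-order compatibility condition at the free boundary whose sign is far less transparent than the first-order identity $\nabla_\mu\psi=-\sum h^{\partial M^n}_{ij}\varphi_i\varphi_j\leqslant 0$ used in Lemma~\ref{lemma4.1}. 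Your strategy might be completed with enough work, but the paper's graphical shortcut is both shorter and avoids precisely the difficulty you flag.
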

\begin{proof}
By Lemma \ref{lemma5.2}, we know that the H\"{o}lder norms of
$u=\widehat{u}e^{t/n}$ cannot blow up as $T$ tends to the
maximal time $T^{\ast}<\infty$, which implies that $u$ can be
extended to a solution to $(\widetilde\sharp)$ in $[0,T^{\ast}]$.
The short time existence result (see Lemma \ref{lemma2.1}) and the
higher-order estimates (see Lemma \ref{lemma5.2}) imply the
existence of a solution beyond $[0,T^{\ast}]$ which is smooth away
from $t=0$. This is a contradiction. Therefore, we have
$T^{\ast}=\infty$.

By Lemmas \ref{lemma3.1} and \ref{lemma4.1}, we can get the
following estimate
\begin{eqnarray*}
|Du|\leqslant Ke^{-\gamma t},
\end{eqnarray*}
where $K$ is a positive constant depending only on $C_3$
$\lambda(\inf u(\cdot,0))$, $\gamma$ is a constant depending only on
$\alpha$ and the dimension $n$. By the Arzel\`a-Ascoli theorem,we
know that every subsequence of $\widehat u$ converges to a constant
function $\omega_{\infty}$ in $C^{1}(M^n)$. Assume that $\widehat u$
convergent to the constant $\omega_{\infty}$ in $C^{k}(M^n)$. Since
$\widehat u$ is uniformly bounded in $C^{k+\beta +1}$ and $\widehat
u$ is H\"older continuous, by the Arzel\`a-Ascoli theorem we know
that there exists a subsequence convergent to  $\omega_{\infty}$ in
$C^{k+1}(M^n)$. Then we can get the conclusion that every
subsequence must converge, and the limit has to be
$\omega_{\infty}$. Therefore, $\widehat u$ converges  to
$\omega_{\infty}$ in $C^{k+1}(M^n)$. The $C^{\infty}$ convergence
follows by the induction.

Finally, we accurately describe the asymptotic behavior of rescaled
hypersurfaces as time tends to infinity. Recall that
$h^i_j=\frac{1}{\lambda v}(\lambda'\delta ^i_j-\tilde
\sigma^{ik}\varphi _{kj})$. So, by the assumption
$0<\lambda'\leqslant C$, Lemmas \ref{lemma3.1} and \ref{lemma5.2},
it follows that
\begin{eqnarray*}
\left|\widehat h ^i_j-\delta^i_j\right|&\leqslant&
e^{-\frac{1}{n}t}\left(\frac{\lambda'}{\lambda
v}-1\right)\delta^i_j+\frac{1}{\lambda
v}e^{\frac{1}{n}t}\widetilde \sigma ^{ik}\varphi
_{kj}\\
&\leqslant& e^{-(\frac{n+1}{n})t}\left(\frac{\lambda'}{v\lambda
e^{-\frac{1}{n}t}}\delta^i_j+\frac{1}{v\lambda
e^{-\frac{1}{n}t}}\widetilde \sigma ^{ik}\varphi
_{kj}\right)\\
&\leqslant& C_{5}e^{-\delta t}
\end{eqnarray*}
for some positive constant $C_{5}$ depending on $C$, $D$, $C_3$,
$\lambda(\inf u(\cdot,0)) $, and some constant $\delta$ depending on
$\alpha$ and $n$. So, from the above argument, we know that after
rescaling, the evolving hypersurfaces converge smoothly to a piece
of a geodesic sphere as time tends to infinity.
\end{proof}

Theorem \ref{main1.1} follows naturally from Lemmas \ref{lemma2.1}
and \ref{lemma5.3}.

\vspace{0.5 cm}

$\\$\textbf{Acknowledgments}. This research was supported in part by
the National Natural Science Foundation of China (Grant Nos.
11201131, 11401131 and 11101132) and Hubei Key Laboratory of Applied
Mathematics (Hubei University).

\vspace {1cm}

\end{document}